\newtheorem{lemma}{Lemma}
\newtheorem{definition}{Definition}
\newtheorem{theorem}{Theorem}
\newtheorem{proposition}{Proposition}
\newtheorem{rem}{Remark}
\begin{document}

\title{The Dynamical Discrete Web}

\author{
{\bf L.~R.~G.~Fontes}
{\small \sl Instituto de Matem\'{a}tica e Estat\'{i}stica,
Universidade de S\~{a}o Paulo, Brazil}\\
\and
{\bf C.~M.~Newman}
{\small \sl Courant Inst.~of Mathematical Sciences, NYU, New York, NY 10012}
\and
{\bf K.~Ravishankar}
{\small \sl Dept.~of Mathematics, SUNY College at New Paltz, New Paltz, NY 12561}
\and
{\bf E.~Schertzer}
{\small \sl Courant Inst.~of Mathematical Sciences, NYU, New York, NY 10012}
}

\date{}
\maketitle

\begin{abstract}
The dynamical discrete web (DDW),
introduced in recent work of Howitt and Warren,
is a system of coalescing simple symmetric
one-dimensional random walks which evolve in an extra continuous dynamical
time parameter $s$. The evolution is by independent updating of the underlying
Bernoulli variables indexed by discrete space-time that define the discrete
web at any fixed $s$. In this paper, we study the existence 
of exceptional (random) values of $s$ where the paths of the web
do not behave like usual random walks and the Hausdorff dimension
of the set of exceptional such $s$. 
Our results are motivated by
those about exceptional times for dynamical percolation
in high dimension by H\"{a}ggstrom, Peres and Steif, 
and in dimension two by Schramm and Steif. The exceptional behavior of
the walks in the DDW 
is rather different from the situation  
for the dynamical random walks of Benjamini,
H\"{a}ggstrom, Peres and Steif.
In particular, we prove that there are exceptional values of $s$ 
for which
the walk from the origin $S^s(n)$ has $\limsup S^s(n)/\sqrt{n} \leq K$
with a nontrivial dependence of the Hausdorff dimension on~$K$.
We also discuss how these and
other results extend to the dynamical Brownian web,
a natural scaling limit of the DDW.
The scaling limit is the focus of a paper in preparation;
it was also studied by Howitt and Warren and is related to the
Brownian net of Sun and Swart.
\end{abstract}

\newpage
\section{Introduction}
\label{intro}

In this paper, we present a number of results concerning a dynamical
version of coalescing random walks,
which was recently introduced in~\cite{HW07}.
Our results concern times of Hausdorff dimension less than one where
the system of coalescing walks behaves exceptionally. The results are analogous to
and were motivated by the model of dynamical percolation and
its exceptional times~\cite{HPS97,SS05}. In this section, we define
the basic model treated in this paper, which we call 
the dynamical discrete web (DDW),
recall some facts about dynamical percolation, and then briefly describe
our main results. The justification for calling this model a discrete web
is that there is a natural scaling limit, which is one of our main motivations
for analyzing the discrete web (as it is in~\cite{HW07}); we also
discuss in this section that scaling limit, which is a dynamical
version of the Brownian web (see~\cite{A81,TW98,STW00,FINR04}). 
A paper is in preparation~\cite{NRS07}
on the construction of that model, which is closely related to the Brownian
net of Sun and Swart~\cite{SS06}. We note that conjectures conerning ways
to construct scaling limits of dynamical percolation (in two-dimensional space)
appear in~\cite{CFN06}.
We further note that exceptional times for dynamical
versions of random walks in various spatial dimensions
have been studied in \cite{BHPS03,Hoff05,AH06} and elsewhere, but
these are quite different from the random walks of the DDW, as we note in Subsection~\ref{mainresults} below.

\subsection{Coalescing Random Walks And The Dynamical Discrete Web}
\label{coalescing}

Let $S^0 (t)$ for $t=1,2,\dots$ denote a simple symmetric random walk
on $\mathbb{Z}$ starting at $(0,0)$, i.e. at $0$ at $t=0$.
(For real $t \geq 0$, we set $S^0 (t) = S^0 ([t])$, where $[t]$ denotes
the integer part of $t$.)
If we also consider other simple symmetric random walks starting
from arbitrary points on the even space-time 
sublattice 
$\mathbb{Z}_{even}^2 = \{(i,j)\in \mathbb{Z}^2 : i+j 
\, \textrm{is even}\,\}$,
which are independent of each other except that they coalesce when they meet,
that is the system of (one-dimensional) coalescing random walks that is
closely related to the one-dimensional (discrete time) 
voter model (see~\cite{H78}) and may be thought of as a one plus one dimensional
directed percolation model.

The percolation structure is highlighted by defining $\xi_{i,j}^0$ for
$(i,j)\in \mathbb{Z}_{even}^2$ to be the increment 
between times $j$ and $j+1$ of the random walker
at location $i$ at time $j$. These Bernoulli variables
are symmetric and independent and the paths of all the coalescing random
walks can be reconstructed by assigning to any point $(i,j)$
an arrow pointing from  $(i,j)$ to $\{i+\xi_{i,j}^0,j+1\}$ and
considering
all the paths starting from arbitrary points in $\mathbb{Z}_{even}^2$
and following the arrows.
We note that there is also a set of dual (or backward) paths defined
by the same $\xi_{i,j}^0$'s with arrows from $(i,j+1)$ to $(i-\xi_{i,j}^0,j)$.
The collection of all dual paths is a system of backward (in time) coalescing
random walks that do not cross any of the forward paths.

The DDW is a very simple
stochastic process $W^s$ in a new dynamical time parameter $s$ whose
distribution at any deterministic $s$ is exactly that of the 
static coalescing
random walk model just described. Specifically, let
$(\xi)_s=(\xi_{i,j}^s,(i,j)\in \mathbb{Z}_{even}^2)_{s\in[0,\infty)}$
be a
family of independent continuous time cadlag Markov Processes
with state space $\{-1,+1\}$ and rate $\lambda /2$
for changing state in either direction, with the initial
condition that $(\xi_{i,j}^0,(i,j)\in \mathbb{Z}_{even}^2)$ is a family
of independent Bernoulli random variables with
$\mathbb{P}(\xi_{k,n}(0)=+1)=\frac{1}{2}$.

\subsection{Analogies With Dynamical Percolation}
\label{analogies}

Although this dynamical version of coalescing random walks sounds
quite trivial at first hearing, it turns out that it can have
interesting behavior at exceptional values of the dynamical time parameter s.
This is a feature that it shares in common with dynamical percolation.

Static percolation models are defined also in terms of independent Bernoulli
variables 
$\xi_z^0$, indexed by points $z$ in some $d$-dimensional lattice,
which in general are asymmetric with parameter $p$. There is a critical value
$p_c$ when the system has a transition from having infinite clusters (connected
components) with probability zero to having them with probability one. It is
expected that at $p=p_c$ there are no infinite clusters and this is proved for
$d=2$ and for high $d$ (see, e.g., \cite{G89}).
In dynamical percolation one
extends $\xi_z^0$ to time varying functions $\xi_z^s$, as in the case
of coalescing walks, except that the transition rates for the jump processes
$\xi_z^s$ are chosen to have the critical asymmetric $(p_c,1-p_c)$
distribution to be invariant. The question raised 
in~\cite{HPS97} 
was whether there were exceptional times when an infinite cluster (say,
one containing the origin) occurs, 
even though this does not occur at deterministic
times. This was answered negatively in~\cite{HPS97} for large $d$
and, more remarkably, was answered positively
by Schramm and Steiff for $d=2$ in~\cite{SS05},
where they further obtained upper and lower bounds on the Hausdorff
dimension (as a subset of the dynamical time axis) of these exceptional times.

\subsection{Main Results}
\label{mainresults}

We apply in this paper the approaches used for dynamical percolation
to the dynamical discrete web. Although we restrict attention to
one-dimensional random walks
whose paths are in two-dimensional
space-time and hence analogous to $d=2$ dynamical percolation,
by considering different possible exceptional phenomena, we use
both the high $d$ and $d=2$ approaches 
of~\cite{HPS97,SS05}.

A natural initial question was whether there might be exceptional
dynamical times $s$ for which 
the walk from the origin $S^s(t)$ is transient
(say to $+\infty$). Our first main result (see Theorem~\ref{th1}
in Section~\ref{tameness} below),
modeled after the
high-$d$ dynamical percolation results, is that
there are no such exceptional times.
As we explain in 
Remark~\ref{remtame} in Section~\ref{tameness}, a small modification 
of the proof of Theorem~\ref{th1}
shows that there are also no exceptional times where some pair
of walks avoids eventually coalescing.

Our other two main results are modelled after the $d=2$ dynamical
percolation results. One of them
(see Theorem~\ref{violation} below) concerns a kind of violation of
the Central Limit Theorem, or more accurately a kind of weak
subdiffusivity, by the random walk $S^s(t)$ for exceptional 
dynamical times
$s$; namely, that 
$S^s(t) \geq -k -K\sqrt{t}$ for all $t\geq0$. The
other (see Theorem~\ref{dimension}) gives upper and lower bounds
on the Hausdorff dimension of these exceptional times,
that depend nontrivially on the constant $K$ so that the
dimension tends to zero (respectively, one) as $K\to0$ (respectively,
$K\to \infty$).
This is strikingly in contrast with the dynamical random walks of
\cite{BHPS03} where there are no exceptional times for which
the law of the iterated logarithm fails.
To explain why the walks of \cite{BHPS03} can behave so differently
from those of the discrete web, we note that a single switch in the
former case affects only a single increment of the walk while some
switches in the discrete web change the path of the walker by a
``macroscopic'' amount, as discussed in the next subsection on scaling
limits~---~see also Figure~1  
where switching has changed one of the paths macroscopically.

By an obvious symmetry argument, there are also exceptional 
dynamical times $s$
for which $S^s(t) \leq k+K\sqrt{t}$. One may ask whether there are
exceptional $s$ for which $|S^s(t)| \leq k+K\sqrt{t}$.
As discussed in Remark~\ref{twosided} below, it can be shown,
at least for small $K$, that there are no such exceptional times.
The case of large $K$ is unresolved.

\begin{figure}
\label{fixeds}
\centering
\includegraphics[scale=0.6]{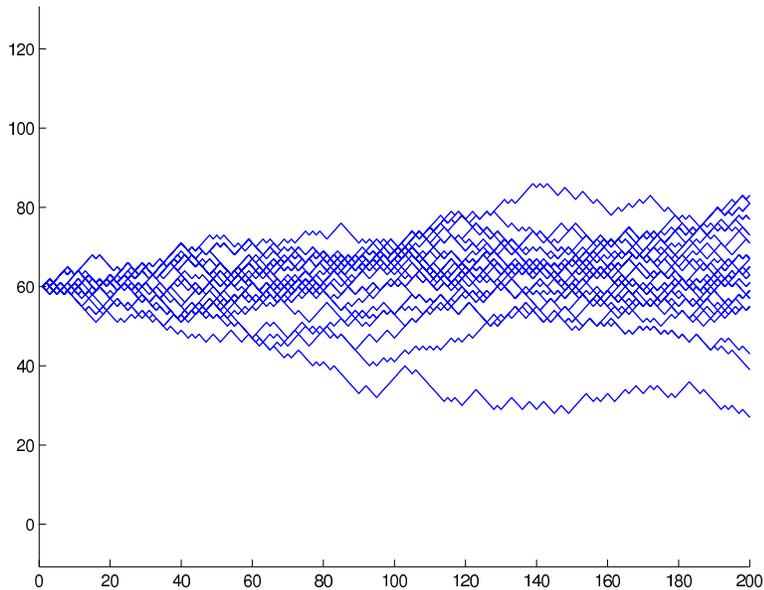}
\caption{
Let $S_{60}^s$ be the random walk at dynamical time $s$ starting from $x=60$.
This graph, with $t$ the horizontal and $x$ the vertical
coordinate, represents simultaneously the family of functions
$\{t\rightarrow S_{60}^s(t)\}_{s\in\mathbb{N}, 0\leq s\leq 40}$
($\lambda=\frac{1}{\sqrt{200}}, \, 0\leq t\leq 200$). The lowest path, 
say for $t$ greater than about $70$, differs ``macroscopically'' from the others.}
\end{figure}

\subsection{Scaling Limits}
\label{scalinglimits}

There is a natural scaling limit of the (static) coalescing random walks
model, the Brownian web (see~\cite{A81,TW98,STW00,FINR04}). 
Here one does a usual diffusive scaling
in which the random walk time $t$ is scaled by
$\delta^{-1}$, and space by $({\sqrt\delta})^{-1}$
so that the random walk path starting from 
$[x_0 ({\sqrt\delta})^{-1}]$ at time
$[t_0 \delta^{-1}]$ scales to a 
Brownian motion starting from $x_0$ at time $t_0$.
The collection of all random walk paths from all space-time starting
points scales to a collection of coalescing Brownian motion paths
starting from all points of continuum space-time.
Now taking the rate of switching to be of order $\sqrt{\delta}$,
rescaling time and space respectively by $\delta^{-1}$ and $({\sqrt\delta})^{-1}$,
and then letting $\delta$ go to $0$ leads to a nontrivial limit
$(\mathcal{W}^s)_{s\geq 0}$, the dynamical Brownian web.

The idea of taking a scaling limit of the dynamical discrete web to
obtain a dynamical continuum model is a natural one, which is at
the heart of~\cite{HW07}, although their approach appears to be somewhat different
than the one we had already been taking.
Both approaches are closely related to the Brownian net
construction of Sun and Swart~\cite{SS06} as will be
extensively explored in~\cite{NRS07}. As we shall discuss
in the next subsection, our approach is based on the construction of a certain
Poissonian marking of special space-time points of the (static) Brownian web.
These are the so-called $(1,2)$ points where a single Brownian web path
enters the point from earlier times and then two paths leave to
later times, one to the left and one to the right with exactly one of the
those two paths the continuation of the path from earlier time and the
other one ``newly-born''; see Figure~2.  

Neither the idea of doing a Poissonian marking of special points for
the Brownian web nor the idea of using those marked points to
construct a scaling limit of a dynamical discrete model is completely new.
In particular, we note that a different type of marking (of $(0,2)$ points)
was used in~\cite{FINR05} to study the scaling limits
of noisy voter models. Also the idea of using marked double points of
$SLE_6$ to construct the scaling limit of two-dimensional dynamical
percolation is discussed in~\cite{CFN06}.
Indeed, one motivation for the proposed marking in the $SLE_6$ context
was the analogy with markings of $(1,2)$ as well as of $(0,2)$ points
of the Brownian web.

In the dynamical Brownian web ${\cal W}^s$, one can also consider exceptional
dynamical times $s$ where the path ${\cal S}^s (t)$ starting from the
origin at continuous time $t=0$ behaves differently than an ordinary
Brownian motion path. The results of~\cite{NRS07}
are very similar to those of this paper for the discrete web. Indeed,
in some respects, the proofs are simpler since calculations with
Brownian motions are often easier than those with random walks. There
is however one substantial complication, which is the main focus
of~\cite{NRS07} and the reason we do not present the dynamical Brownian
web exceptional time results already in this paper. That complication
is the actual construction of the dynamical Brownian web --- a construction
that is considerably less trivial than that of the dynamical discrete
web, as we explain in the next subsection.

\subsection{The Dynamical Brownian Web}
\label{continuum}

\begin{figure}
\label{changedirections}
\centering
\includegraphics[scale=0.3]{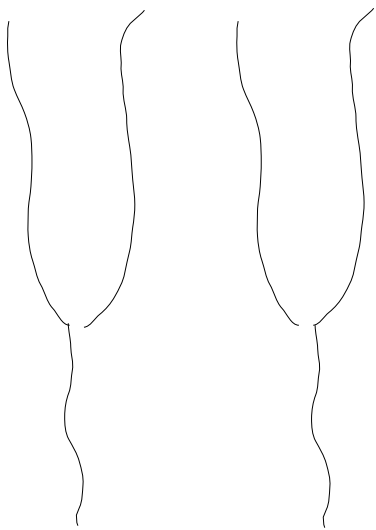}
\caption{
In this pair of diagrams, $t$ is the vertical and $x$ the
horizontal coordinate.
If a $(1,2)_l$ (left) point of the original 
web (left side of the figure) is marked to switch at
some $s_0 \in [0,s]$, then the direction of that $(1,2)$ point in
the web $\mathcal{W}^s$ is changed at $s=s_0$ so that it becomes
a $(1,2)_r$ (right) point as on the right side of the figure
with the incoming path joined to
the rightmost of the two paths starting from that point.}
\end{figure}

At the discrete level the scaling is chosen in such a way that between
the dynamical times $0$ and $s$, in a macroscopic box
(i.e., one with size of order $(\sqrt \delta)^{-1} \times \delta^{-1}$
in the original lattice), the number of
arrows that change direction will be of order $\delta^{-1}$.
The situation can be simplified
by focusing on switchings with
``macroscopic'' effects (i.e., switchings
that will lead to a macroscopic alteration of a walker's trajectory in
the initial web $W^0$). A priori, one should also consider
combinations of switchings that have macroscopic effects,
but it turns out
(this will be proved rigorously in~\cite{NRS07})
that the probability of macroscopic effects from switching
two or more arrows is negligible compared to switching
single arrows, and can be neglected. 

There is a natural way of characterizing those critical switchings. For
example, let us  consider the forward (rescaled) path $S^\delta$
starting from the origin in $W^0$
and assume that the arrow located at 
some $(S^\delta(t),t)$ is orginally oriented
to the left. Now we ask whether
a switching of this single arrow will alter the path in such
a way that the altered path  will be to the right of
$(S^\delta (t+\Delta t)+\Delta x,t+\Delta t)$, where  $(\Delta x,\Delta t)$
are both positive macroscopic quantities.
This will happen if and only if the backward path ${\hat S}^\delta$ starting
from $(S^\delta(t+\Delta t)+\Delta x,t+\Delta t)$  hits $S^\delta$ at time $t$
(more precisely, hits $(S^\delta(t), t+\delta)$ at time $t+\delta$). More
generally, the critical arrows leading to similar alterations are the
``contact'' points between $S^\delta$ and
the backward path ${\hat S}^\delta$ at which a
switching occurs on $[0,s]$. But it is now fairly easy to see
what the statistics of such a set of points are.
In fact, let
$m^\delta(t)$ be the random variable counting the number of such
switchings up to the macroscopic time $t$. The distribution of
$m^\delta(t)$ is simply given by:
\begin{eqnarray*}
m^\delta(t) & = & \sum_{i=1}^{\frac{1}{\sqrt\delta} L^\delta(t)} X_i^\delta
\ \ \ \textrm{with} \\
L^\delta(t) & = & \sqrt{\delta} \ \ \# \{k\leq \frac{t}{\delta}:
S^\delta(k \delta)={\hat S}^\delta (k \delta + \delta) \ \ \
\textrm{and there is a left arrow at $(S^\delta(k \delta), k \delta)$}\}
\end{eqnarray*}
where $\{X_i^\delta\}$ are i.i.d.~Bernoulli random variables with
$\mathbb{P}(\{X_i^\delta\}=1)= 1 - \exp{(-\sqrt{\delta} \lambda s})$,
which is $\sqrt{\delta} \ \lambda s + o(\sqrt{\delta})$ as $\delta \to 0$.

As $\delta \to 0$, $L^{\delta}$ converges to the ``local time''
$L$ of the forward Brownian path $B\in\mathcal{W}$ starting from the
origin along a backward Brownian path $\hat B$ starting on the right
of the path $B$ (the joint distribution of $B$ and $\hat B$ is analysed in~\cite{STW00} and this ``local time'' will be defined precisely in~\cite{NRS07}).
Further, it is a standard fact that
$t\rightarrow \sum_{i=1}^{t/ \sqrt\delta } X_i^\delta$ converges
to a Poisson process. Hence, $m^\delta(t)$ will converge to a Poisson
process run by the random clock $\lambda \  s L(t)$. In other words,
this set of points will consist of a two-dimensional ($t$ and $s$)
Poisson point
process with intensity measure $\lambda \ d  L \times l$, where $l$ is
Lebesgue measure and $d  L$ is the local time measure (note that
$\lambda  \ d  L \times l$ will be a locally finite measure
so that the Poisson process is
well defined).

So far, we have only selected the critical switchings inducing a
specific type of macroscopic effect. Namely, the ones altering the
path $B$ in such a way that a point originally on one side of
$B$ will be on the opposite side after switching occurs.
But in order to  select all the critical arrows leading to any kind of
macroscopic changes,  we should not only consider a Poisson process run
by the local time of a single forward path $B$ against a single backward path
$\hat B$,  but rather a Poisson process run by the
``local time of the entire forward web along the entire backward web''
multiplied by the intensity $\lambda s$.
In other words, the set of
marked points will be a three-dimensional Poisson point process with
intensity measure $\lambda \ {\cal L} \times l$, where $l$ is Lebesgue
measure (in the variable $s$) and ${\cal L}$ is
the local time measure of the forward web along the
backward web.
\footnote{Actually, the situation is a bit more complicated since
${\cal L}$ would not be a locally finite measure --- i.e., the set of
marked points in space-time is actually dense in $\mathbb{R}^2$.
However, like what is presented in~\cite{FINR04} (see p.~11 there),
one can add an extra coordinate and lift ${\cal L}$ to be a $\sigma$-finite
measure, or equivalently approximate ${\cal L}$ by a sequence
of locally finite measures ${\cal L}_n$, do the markings using
${\cal L}_n$, and then let $n \to \infty$.}

Since the $(1,2)$ points  of the  continuum web are
precisely those at which a
forward and a backward path meet (see, e.g.,~\cite{FINR04}),
the measure
$\lambda\  {\cal L} \times l$ will be supported by this set
of points. From our previous description of them, it should
be clear that each $(1,2)$ point
has a preferred left or right ``direction''. For example,
a $(1,2)_r$ (right)
point is one for which the continuing path (coming in from earlier
time) is to the right of
the ``newly-born'' path. Hence, at the continuum level, the analog of
an arrow switching will simply be a change of direction of all marked
$(1,2)$ points (see Figure~2). 
The web $\mathcal{W}^s$ at time $s_0$
will be ``simply''
deduced  from $\mathcal{W}^0$ by switching the direction of all
marked $(1,2)$ points whose $s$-coordinate is in $[0,s_0]$.

A last comment concerns the nature of the dependence of the
two continuum paths $B^s (t)$ and $B^{s'} (t)$.
These turn out to be a pair of ``sticky'' Brownian motions,
which are independent except when they touch each other. This is
one of the major observations in~\cite{HW07}; we give a brief
derivation of this fact in Section~\ref{pairsofpaths} by analyzing
pairs of paths in the discrete
setting to see what must occur in the continuum scaling limit.
In Section~\ref{tameness}, we state our main theorem about tameness;
i.e., that there are no exceptional dynamical times when the random walkers
are transient. 
We also give there some other results about tameness in two extended
remarks --- one about non-coalescence and the other about two-sided
bounds of order $\sqrt{t}$.
Then in Section~\ref{existence}, we show that there
are exceptional dynamical times when the walkers are 
(weakly) subdiffusive --- i.e., have one-sided bounds of order $\sqrt{t}$.
In Section~\ref{hausdorff} we derive upper and lower bounds
on the Hausdorff dimension of the set of such exceptional dynamical times.
Some estimates for random walks that are needed for our arguments
are given in Appendix~\ref{appendix}.

\section{Pairs Of Paths In The Dynamical Discrete Web}
\label{pairsofpaths}

%
%
%
\subsection{Interaction Between Paths In $W^s$ And $W^{s'}$}
\label{interaction}

The dynamics can be described, equivalently to the definition in
Section~\ref{intro}, in the following manner.
The initial configuration
is set to $\xi_0$, but now we place independent Poisson clocks
at each site $(i,j)$ that ring at rate $\lambda$.
Every time clocks ring we toss independent fair coins to decide
on the values of $\xi_{i,j}$ after the ring.
Statistically the two
descriptions are equivalent. 

The motivation for this second description is that it leads to a useful
representation of the interaction between the discrete webs at different
dynamical times $s$ and $s'>s$. In particular, let $S^s$ and $S^{s'}$ be the walks
starting at $(0,0)$ and defined for times $t=0,1,2,\dots$,
belonging  to $W^s$ and $W^{s'}$.

If $S^s(t)\neq S^{s'}(t)$, then $S^s(t+1)-S^s(t)$ and $S^{s'}(t+1)-S^{s'}(t)$
are independent since the directions of the arrows at two distinct sites
are independent. On the other hand, if $S^s(t)= S^{s'}(t)$, then the
next steps of the two walks are now correlated. If the clock at the site
$(S^s(t),t)$ did not ring on $[s,s')$, then the two paths will coincide
at time $t+1$. If it rang at least once, then with probability
$\frac{1}{2}$ they will coincide, and with probabily $\frac{1}{2}$ they won't.

Let us now define inductively a sequence of pairs of stopping times
$(\tau_i,\sigma_i)$ with $\tau_0=\sigma_0=0$ and:
\begin{eqnarray}
\tau_{i+1}=\inf\{t > \sigma_{i}: \ S^s(t)=S^{s'}(t)\} \\
\sigma_i=\inf\{t \geq \tau_{i}: \ \ \textrm{the clock at}\ (S^s(t),t)
\ \textrm{rings in} \  [s,s')\}
\end{eqnarray}
On the interval of integer time $[\tau_i,\sigma_i]$, the paths $S^s,S^{s'}$
coincide and at time $\sigma_{i}$ they decide to separate with
probability $\frac{1}{2}$. In other words, from time
$\sigma_i$, the walkers
$(S^s,S^{s'})$ move independently until the next meeting time $\tau_{i+1}$.
Hence, if we skip the intervals of time
$\{[\tau_i,\sigma_i)\}_i$, $(S^s, S^{s'})$ behave as two independent
random walks $(S_1,S_2)$, while if we skip the intervals
$\{[\sigma_i,\tau_{i+1})\}_i$, the two walks coincide with a single
random walk $S_3$.
Furthermore, since $S_3$ is constucted from the arrow configuration at
different sites than the ones used to construct $(S_1,S_2)$, it is
independent of $(S_1,S_2)$ ; and
$\{\sigma_i-\tau_{i}\}$ are i.i.d. random variables with
$\mathbb{P}(\sigma_i-\tau_{i} \geq k) = (e^{-\lambda |s-s'|})^k$.

Now, skipping the intervals $\{[\tau_i,\sigma_i)\}_i$ corresponds to
making the random time change
$t\rightarrow C(t)$  with $(C)^{-1}(t)={\hat L}(t)+t$, and
\begin{enumerate}
\item ${\hat L}(t)= \sum_{i=1}^{{\hat l}(t)} \sigma_i-\tau_{i}$ \, ,
\item ${\hat l}(t)=\#\{k\leq t: S_1(k)=S_2(k)\}$ \, ,
\end{enumerate}
while skipping $\{[\sigma_i,\tau_{i+1})\}_i$ corresponds to
making the time change $t\rightarrow t-C(t)$; i.e.,
\begin{eqnarray}
S^s(t)=S_1(C(t))+S_3(t-C(t)), \\
S^{s'}(t)=S_2(C(t))+S_3(t-C(t)),
\end{eqnarray}
where $(S_1,S_2,S_3)$ are three independent standard random walks.
In the following $(\tilde S_1,\tilde S_2)$,
distributed as $(S^s,S^{s'})$, will be referred to
as a pair of {\it sticky random walks\/}.

\begin{figure}
\label{varyings}
\centering
\includegraphics[scale=0.6]{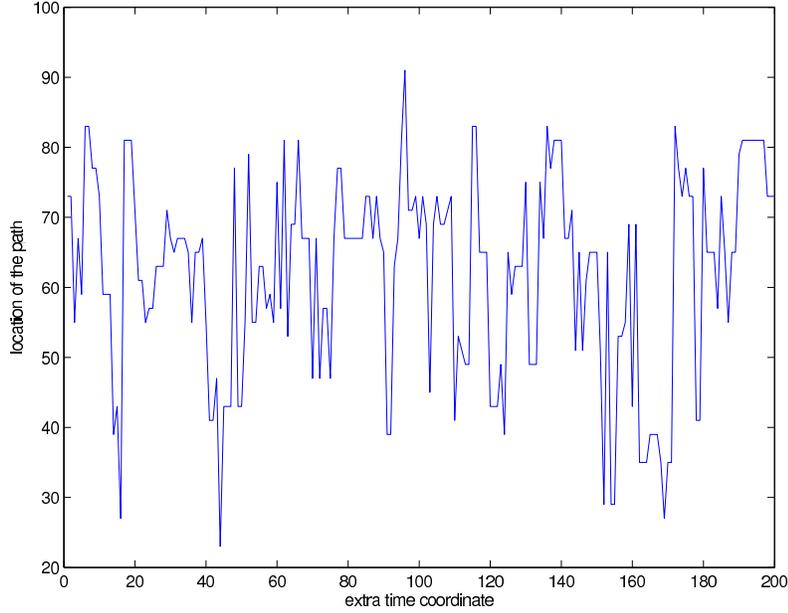}
\caption{ For $t$ fixed, this graph represents $s\rightarrow S^s(t)$
starting from $x=60$ (with $t=200$ and $\lambda=\frac{1}{\sqrt{200}}$).}
\end{figure}

\subsection{Sticky Paths In The Scaling Limit}
\label{stickypaths}
Time and space are respectively rescaled by
$\delta^{-1}$ and $\delta^{-1/2}$
and the rate of switching is taken as
$\lambda = {\bar \lambda} \sqrt{\delta}$.
From the previous section, the pair of rescaled processes
$(\sqrt{\delta} S^s(\frac{t}{\delta}), \sqrt{\delta} S^{s'}(\frac{t}{\delta}))$
is statistically equivalent to
\begin{equation}
\label{stat1}
\tilde S_1^{\delta}=S_1^\delta(C^\delta(t))+S_3^\delta(t-C^\delta(t)) 
\end{equation}
\begin{equation}
\label{stat2}
\tilde S_2^{\delta}=S_2^\delta(C^\delta(t))+S_3^\delta(t-C^\delta(t))
\end{equation}
where $S_1^\delta,S_2^\delta,S_3^\delta$ are three independent rescaled
random walks, and $(C^\delta)^{-1}(t)={\hat L}^\delta(t)+t$ with
\begin{enumerate}
\item ${\hat L}^\delta(t)={\delta}
\sum_{i=1}^{ {\hat l}^\delta(t) {\delta}^{-1/2}} T_i$
\item ${\hat l}^\delta(t)=\sqrt{\delta}\#\{k \leq t/{\delta}:
S_1^\delta(k\delta)=S_2^\delta(k\delta)\}$
\item $\{T_i\}$ are i.i.d random variables taking values
in $\mathbb{N}$, with $\mathbb{P}(T_i \geq k)=e^{-k \ \lambda |s-s'|}$.
\end{enumerate}

As $\delta \to 0$, $(S_1^\delta,S_2^\delta, S_3^\delta)$ converges in
distribution to three independent Brownian motions $(B_1,B_2,B_3)$; and
${\hat l}^\delta$ converges in distribution to the local time
${\tilde L}$ at the origin of $|B_1-B_2|$.
Moreover, as a consequence of the Law of Large Numbers,
if we take $\lambda=\bar \lambda \sqrt{\delta}$, with $\bar \lambda$
of order $1$, then ${\hat L}^\delta(t)$ converges to
$({|s-s'|\bar \lambda})^{-1} {\tilde L}(t)$.
Hence, it should come as no surprise
that $(\tilde S_1^{\delta},\tilde S_2^{\delta})$ converges to
\begin{eqnarray}
\tilde B_1=B_1(C(t))+B_3(t-C(t)) \\
\tilde B_2=B_2(C(t))+B_3(t-C(t))
\end{eqnarray}
where now $C^{-1}(t)=t+ ({|s-s'|\bar \lambda})^{-1} {\tilde L}(t)$,
which is identical in distribution to a pair of sticky Brownian motions
with stickiness parameter $({|s-s'|\bar \lambda})^{-1}$ (see, e.g., [SS06]).

We note that for small $\delta$, the location
${\tilde S}^{\delta,s}(t)$, of the path starting from
some $(x_0,t_0)$ is, for fixed $t$, quite discontinuous
in $s$ --- see Figure~3.  

\section{Tameness}
\label{tameness}

\begin{theorem}
\label{th1} Almost surely, all the paths are recurrent for every
$s$.
\end{theorem}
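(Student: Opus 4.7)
The plan is to follow the ``high-dimensional dynamical percolation'' approach of~\cite{HPS97}. Fix $N\in\mathbb{N}$. By coalescence of the web, together with a countable union over starting points $(x_0,j_0)\in\mathbb{Z}_{even}^2$ and over $N$, it suffices to show that almost surely, for every $s\in[0,N]$, the walk $S^s:=S^{s,0,0}$ starting at the origin is recurrent. Using the $\pm1$-symmetry of the Bernoulli family, non-recurrence means transience to $+\infty$ or to $-\infty$, so by a further countable union over $k\in\mathbb{N}$ it is enough to rule out, for each fixed $k$, the event that some $s\in[0,N]$ satisfies
\[
A_k^s := \{\, S^s(t) \geq -k \textrm{ for all } t \geq 0\,\}.
\]

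For each deterministic $s$, $S^s$ is an ordinary simple symmetric random walk on $\mathbb{Z}$, hence recurrent, so $\mathbb{P}(A_k^s)=0$. By Fubini the random set $\{s\in[0,N]:A_k^s\}$ therefore has Lebesgue measure zero a.s. The core of the proof is upgrading ``measure zero'' to ``empty''. For this I would introduce the finite-horizon event $A_{k,T}^s := \{\,S^s(t)\geq -k \textrm{ for all } t\in[0,T]\,\}$, noting that $A_k^s\subset A_{k,T}^s$ and $p_{k,T}:=\mathbb{P}(A_{k,T}^s)\asymp k/\sqrt{T}$ by the reflection estimate from the appendix, so it suffices to show
\[
\mathbb{P}\bigl(\exists s\in[0,N]:A_{k,T}^s\bigr) \xrightarrow[T\to\infty]{} 0.
\]

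To estimate this probability, I would partition $[0,N]$ into subintervals $I_i$ of length $\varepsilon=\varepsilon(T)$ and, on each, couple the family $\{S^s\}_{s\in I_i}$ to the walk $S^{s_i}$ at the left endpoint: the paths $S^s|_{[0,T]}$ and $S^{s_i}|_{[0,T]}$ coincide except at dynamical times $s$ at which some Poisson clock along the path $S^{s_i}|_{[0,T]}$ has fired during $[s_i,s]$. The distinct trajectories thus produced can be enumerated in terms of these clock firings, and the occurrence of $A_{k,T}^s$ on each of them is controlled by the joint sticky-walks law of the pair $(S^{s_i},S^s)$ developed in Section~\ref{pairsofpaths}, i.e.\ by the decomposition $S^s(t)=S_1(C(t))+S_3(t-C(t))$ with an independent second random walk playing the role of $S^{s_i}$.

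The main obstacle is that a naive union bound over distinct trajectories is insufficient: the expected number of configuration changes of $S^s|_{[0,T]}$ over $s\in[0,N]$ is of order $\lambda NT$, whereas $p_{k,T}\asymp k/\sqrt{T}$, so the product diverges as $T\to\infty$. The saving must come from genuine correlations between $A_{k,T}^s$ and $A_{k,T}^{s'}$ for nearby $s,s'$: on the event $A_{k,T}^{s_i}$, the reflection principle forces $S^{s_i}$ to hover within $O(\sqrt{T})$ of the barrier $-k$, so a typical arrow flip along the path knocks the new trajectory below $-k$ with order-one probability, and successive configurations of $S^s|_{[0,T]}$ over $s\in I_i$ are therefore strongly anticorrelated. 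Turning this heuristic into a quantitative bound on the expected number of ``turn-on'' transitions of $s\mapsto\mathbf{1}_{A_{k,T}^s}$, via the joint sticky-walks law together with the random-walk estimates of the appendix, is the technical heart of the argument and is where the high-$d$ percolation analogy has to be adapted to the discrete-web setting.
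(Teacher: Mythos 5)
Your reduction is sound: by countable unions over starting points, over $k$, and over $N$, it does suffice to rule out, for each fixed $k$, the existence of $s\in[0,N]$ with $A_k^s=\{S^s(t)\geq -k \ \forall t\}$. You also correctly diagnose that a union bound over the distinct trajectories produced by clock firings on $[s_i,s_i+\varepsilon]$ cannot close, because the expected number of changes ($\sim\lambda NT$) overwhelms $p_{k,T}\asymp k/\sqrt T$. But you then stop at a heuristic (``the saving must come from anticorrelation'') and explicitly defer the quantitative step. That deferred step is exactly the content of the theorem, so as written the proposal has a genuine gap.

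The mechanism you are missing is the one that makes the HPS97 Section 3 argument (which you cite) work, and which the paper imports wholesale: do not enumerate trajectories at all. The event $A_k^s$ is \emph{monotone increasing} in the arrow configuration (more arrows pointing right can only help the walk stay above $-k$). Partition $[0,1]$ into $m$ subintervals and, on the $j$-th one, form the \emph{envelope configuration} $\bar\xi^{(j)}_{i',j'}=\max_{s\in[(j-1)/m,j/m]}\xi^s_{i',j'}$. Monotonicity gives, with no union bound over trajectories,
\begin{equation*}
\mathbb{P}\bigl(\exists\, s\in[\tfrac{j-1}{m},\tfrac{j}{m}]: A_k^s\bigr)\ \leq\ \mathbb{P}\bigl(A_k \ \text{occurs in}\ \bar\xi^{(j)}\bigr).
\end{equation*}
The law of $\bar\xi^{(j)}$ is that of an i.i.d.\ Bernoulli field with bias $p_m=\tfrac12+\tfrac12(1-e^{-\lambda/m})=\tfrac12+O(1/m)$, so the right-hand side is $\tilde\theta(p_m)$ where $\tilde\theta(p)$ is the probability that a $p$-biased walk stays above $-k$ forever. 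The gambler's-ruin formula gives $\tilde\theta(p)=1-((1-p)/p)^k\leq C_k(p-\tfrac12)$, hence each term is $O(1/m)$ and the sum over $j=1,\dots,m$ is $O(1)$. This yields $\mathbb{E}(\tilde N)<\infty$ for the counting variable $\tilde N=\#\{s\in[0,1]:A_k^s\}$, and the remaining steps (the analogues of Lemmas 3.2 and 3.4 of~\cite{HPS97}, which upgrade finite expectation to $\mathbb{E}(\tilde N)=0$) are soft. In short: the linear bound $\tilde\theta(p)\leq C(p-p_c)$ plus monotonicity replace the whole trajectory-enumeration scheme you were attempting; the paper's proof is a direct instantiation of HPS97's Section~3 with $p_c=1/2$, the event ``never visits the site to the left,'' and $\tilde\theta(p)=(2p-1)/p$.

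One smaller point: the paper works with the cleaner event ``never visits the site immediately to the left of the start,'' whose probability under bias $p$ is exactly $(2p-1)/p$; your event $A_k^s$ works too (its probability is $1-((1-p)/p)^k$, also linear near $1/2$), but it is worth knowing the paper's choice makes the estimate a one-liner.
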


\begin{proof}
In Section~3 of~\cite{HPS97}, it is proved that for any
homogeneous graph with critical probability $p_c$ for percolation
and such that $\theta(p)$, the probability that (with parameter
$p$) the origin belongs to an infinite cluster, satisfies
$\theta(p)\leq C(p-p_c)$ for $p \geq p_c$, there is
almost surely no dynamical time $s$ at which percolation occurs.

In our setting, an entirely parallel argument can be used to show
tameness of the dynamical discrete web with respect to
recurrence. We discuss this briefly below, pointing to the
relevant parts of~\cite{HPS97}.

We consider the event $A_{i,j}$ that the walker starting from
$(i,j)$ does not visit the site to the left of its starting
position, that is, that the path 
$S_{(i,j)}$ started at $(i,j)$ does not
contain any $(i-1,k)$ with $k>j$. Let $\tilde\theta(p)$ be the
probability of that event under $p$
--- i.e., when the random walk increments are $+1$ (resp., $-1$)
with probability $p$ (resp., $1-p$). Under the usual coupled
construction of the model for $p\in[0,1]$, this event is
increasing with $p$ in $[1/2,1]$. For $p>\frac{1}{2}$ (resp.
$p<\frac{1}{2}$), 
$S_{(i,j)}$ is distributed as a right (resp. left)
drifting random walk. In particular, it is well known that for
$p\in[1/2,1]$
\begin{equation}\label{tame2}
\tilde\theta(p)=(2p-1)/p.
\end{equation}

We now describe the parallel argument alluded to above. Let
$\tilde N_{i,j}$ denote the cardinality of the set
$\{s\in[0,1]:\,A_{i,j}\mbox{ occurs in }(\xi)_s\}$.
$\tilde\theta(p)$ and $\tilde N_{i,j}$ are the analogues of
$\theta_v(p)$ and $N_v$ in Section~3 of~\cite{HPS97}. An analogue
of Lemma 3.1 there also holds here with the same proof, where here
$1/2$ is the analogue of $p_c$ there, and the analogue of
$N_{v,m}$ is the number $\tilde N_{i,j,m}$ of
$k\in\{1,2,\ldots,m\}$ such that $A_{i,j}$ occurs in
$$\bar\xi^{(k)}=\left\{\bar\xi^{(k)}_{i',j'}=\max_{s\in[(k-1)/m,k/m]}
\xi^s_{i',j'},\,(i',j')\in{\mathbb Z}_{even}\right\},$$ and we
conclude from~(\ref{tame2}) that ${\mathbb E}(\tilde
N_{i,j})<\infty$. Analogues of Lemmas 3.2 and 3.4 also hold with
the same proofs for
the analogue quantities, and with $A_{i,j}$ replacing the event \{{\em
v percolates}\}. We then have that ${\mathbb
E}(\tilde N_{i,j})=0$, and thus almost surely for every $s$ every
walker eventually visits the site to the left of its starting
position. The same is of course true of the site to the right of
the starting position by symmetry. We conclude that almost surely
for every $t$ every walker eventually visits every site in ${\mathbb Z}$
(infinitely often).
\end{proof}

\begin{rem}
\label{remtame} Another property of the static discrete web with
respect to which the dynamical one is tame is the almost sure
coalescence of all of its paths. It is enough to consider the case
of two paths. For those, a similar argument as that for recurrence
holds. The analogue objects to be considered in this case are as
follows. Given $v,v'\in{\mathbb Z}_{even}$ (let us assume that
$v_1<v'_1, v_2=v'_2$), let $C_{v,v'}$ be the event that the paths
starting from $v,v'$ do not eventually coalesce, and let $\tilde
N_{v,v'}$  be the cardinality of the set
$\{s\in[0,1]:\,C_{v,v'}\mbox{ occurs in }(\xi)_s\}$. For $1\leq
k\leq m$, let also $C_{v,v',k,m}$ be the event that the path of
$\bar\xi^{(k)}$ starting from $v'$ and that of
$\underline\xi^{(k)}$ starting from $v$ do not coalesce
eventually, where
$$\underline\xi^{(k)}=\left\{\underline\xi^{(k)}_{i,j}=\min_{s\in[(k-1)/m,k/m]}
\xi^s_{i,j},\,(i,j)\in{\mathbb Z}_{even}\right\}.$$ Let now
$\tilde N_{v,v',m}$ be the number of $k\in\{1,2,\ldots,m\}$ such
that $C_{v,v',k,m}$ occurs. To show that ${\mathbb E}(\tilde
N_{v,v'})=0$, we analyze $\tilde N_{v,v',m}$
and its related quantities analogously to the analysis of $\tilde N_{i,j,m}$
and its related quantities to show that ${\mathbb E}(\tilde
N_{i,j})=0$. In particular, the fact that ${\mathbb P}
(C_{v,v',k,m})\leq\mbox{const.}/m$ follows from standard
random walk estimates, using the fact that the difference of two
random walk paths is another random walk path.
\end{rem}

\begin{rem}
\label{twosided}
A main result of this paper is the existence of exceptional
$s$ such that for all~$t$, $S^s(t) \geq -k-K\sqrt{t} $
(see Theorem~\ref{violation} in Section~\ref{existence}),
and of course there are then also exceptional $s$ such that
$S^s(t) \leq k+K\sqrt{t} $. However, it
can be shown that there are no exceptional $s$
for the two-sided bound $|S^s(t)| \leq k+K\sqrt{t}$,
at least for small enough $K$. The precise condition 
on $K$ under which
we can prove this result is that $1-2p(K)\leq 1/2$, where $p(K)$
is defined in Proposition~\ref{upper} below. Note that this condition
implies according to Proposition~\ref{upper} that the Hausdorff
dimension of the set of 
exceptional $s$ for either of the corresponding one-sided
bounds does not exceed $1/2$. The proof of this tameness
claim combines
arguments like those of Theorem~\ref{th1} and Remark~\ref{remtame}
with the estimates of Lemma~\ref{cont} and Proposition~\ref{principal}
and with an application of the FKG inequalities. The specific FKG inequality,
for the two events $U_\epsilon^{\pm}$ that for some $s\in[0,\epsilon]$
and all~$t$,
$\pm S^s(t) \geq -k-K\sqrt{t} $, is that 
$\mathbb{P} (U_\epsilon^+ \cap U_\epsilon^-) \leq \mathbb{P} (U_\epsilon^+)
\, \cdot \, \mathbb{P} (U_\epsilon^-)$.
This is so because $U_\epsilon^+$ (resp., $U_\epsilon^-$) is an increasing
(resp., decreasing) event with respect to the basic $\xi_{(i,j)}^s$
processes --- see, e.g., Lemma~{3.3} of~\cite{HPS97} for more details.
We finally note that by essentially the same arguments one obtains
tameness for two-sided bounds of the form $-k_1-K_1\sqrt{t} \leq
S^s(t) \leq k_2+K_2\sqrt{t}$ provided that $K_1,K_2$ are small enough
that $(1-2p(K_1))+(1-2p(K_2))\leq 1$.
\end{rem}

\section{Existence of Exceptional Times}
\label{existence}

Let $\{d(k)\}_{k\geq0}$ be a sequence of  positive integers divisible by $4$.
We construct inductively a sequence of ``diffusive'' boxes $B_k$ in the following manner:
\begin{itemize}
\item $B_0$ is the rectangle with vertices $(-\frac{1}{2}\ d(0),0)$,
$(+\frac{1}{2}\ d(0),0)$, $(-\frac{1}{2}\ d(0),d(0)^2)$ and
$(+\frac{1}{2} d(0),d(0)^2)$.
\item Let $z_{n}=(x_n,t_n)$ and $z_n^{''}$ be respectively the middle
point of the lower edge and the upper right vertex of $B_n$. $B_{n+1}$
is the rectangle of height $d(n+1)^2$ and width $d(n+1)$ such that $z_{n+1}$
equals $z_n^{''}$ (see Figure~4).  
\end{itemize}
Let $A_k^s$ be the event that the path of $W^s$ starting at $z_k$ is 
at or to the right of 
$z_{k}^{''}$ at time $t_{k+1}$ and that it is never to the left of the
left edge $\partial_k$ of the box $B_k$ . We would like to prove that
for a certain choice of $\{d(k)\}$, there exist some exceptional times
$s$ at which $\{A_k^s\}$ occurs for every $k$. At those times,
this would imply that the path starting from the origin stays to the
right of the graphs obtained by patching together the left edges,
$\partial_k$, (see Figure~4).  
By the same kind of reasoning
used in dynamical 
percolation~\cite{SS05}, to prove that ordinary diffusive behavior
does not occur at certain exceptional dynamical times $s$,
it suffices to derive the following lemma, which we do later in this
section of the paper.
\begin{figure}
\centering
\includegraphics[scale=0.4]{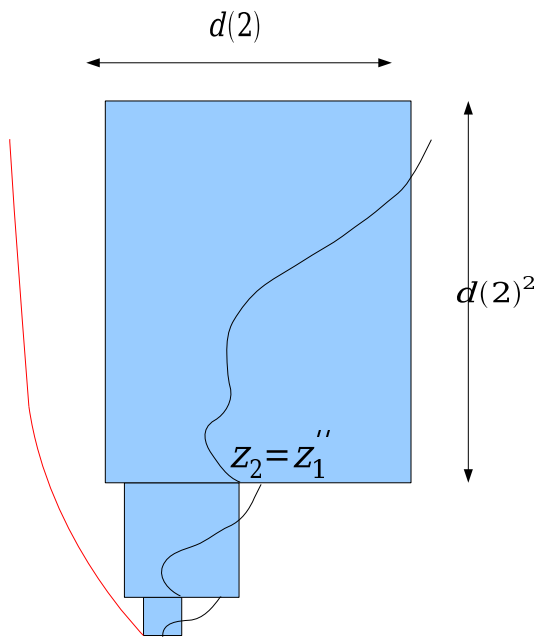}
\caption{
Construction of the first three boxes $(B_0,B_1,B_2)$
with $t$ the vertical and $x$ the horizontal coordinate. The solid curves
represent segments of the paths starting from 
$z_0$, $z_1=z_0^{''}$ and $z_2=z_1^{''}$ for which
the events $A_0^s$, $A_1^s$ and $A_2^s$ occur. The leftmost curve 
represents $-k-K\sqrt{t}$.}
\label{explain}
\end{figure}
\
\begin{lemma}
\label{propp}
There exists $\gamma_0>2$ such that if 
$d(k)=4 ([\frac{\gamma^k}{4\lambda}]+1)$ 
for $k\geq 0$ (where $[x]$ is the integer part of $x$) 
with $\gamma>\gamma_0$, then
\begin{equation}
\label{esd}
\inf_n \mathbb{P}(\int_0^1 \prod_{k=0}^n 1_{A_k^s} \ ds > \ 0 ) \geq p,
\end{equation}
where $p$ is bounded away from $0$ when
$\lambda$ is bounded away from $\infty$.
\end{lemma}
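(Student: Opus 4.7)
The plan is a second moment (Paley--Zygmund) argument applied to $Y_n = \int_0^1 \prod_{k=0}^n 1_{A_k^s}\,ds$, using the sticky random walk representation of Section~\ref{interaction} to control the joint law at two dynamical times $s,s'$. The structural observation that makes the argument tractable is that the boxes $B_k$ occupy disjoint horizontal time strips $[t_k,t_{k+1}]$, so the events $A_k^s$ (and jointly $A_k^s\cap A_k^{s'}$) for different $k$ are driven by disjoint families of Poisson clocks and arrow variables and are therefore independent. Consequently
\begin{equation*}
\mathbb{E}[Y_n] = \prod_{k=0}^n \mathbb{P}(A_k^0), \qquad \mathbb{E}[Y_n^2] = \int_0^1\!\!\int_0^1 \prod_{k=0}^n \mathbb{P}(A_k^s\cap A_k^{s'})\, ds\, ds',
\end{equation*}
and Paley--Zygmund reduces the lemma to showing that $\int\!\!\int \prod_k \mathbb{P}(A_k^s\cap A_k^{s'})/\mathbb{P}(A_k^0)^2\, ds\,ds'$ is bounded uniformly in $n$.

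First I would establish a uniform lower bound $\mathbb{P}(A_k^0)\geq q_0>0$. Since $B_k$ has diffusive dimensions $d(k)\times d(k)^2$ with $z_k$ at the middle of the lower edge and $z_k''$ at the upper-right corner, Donsker's theorem identifies the limit of $\mathbb{P}(A_k^0)$ as $d(k)\to\infty$ with the positive Brownian constant $q_0 = \mathbb{P}(\min_{[0,1]} B \geq -1/2,\, B(1)\geq 1/2)$; for the finitely many small-$d(k)$ cases (which only occur at small $k$ when $\lambda$ is large) the divisibility by $4$ combined with a direct random-walk estimate gives a $\lambda$-dependent but strictly positive lower bound, yielding the uniformity in $\lambda\leq\lambda_{\max}$ claimed in the lemma.

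Next, for the two-point bound, I would use the sticky decomposition $(S^s_{z_k},S^{s'}_{z_k})\stackrel{d}{=}(\tilde S_1,\tilde S_2)$ from Subsection~\ref{interaction}. Two regimes contribute: if the walks remain coalesced throughout $[t_k,t_{k+1}]$ then $A_k^s\cap A_k^{s'}$ is the single-walk event with probability $\mathbb{P}(A_k^0)$, while if they separate before the top of the box then on the remaining time they behave as independent random walks and each individually satisfies an event comparable (up to gambler's-ruin factors) to $A_k^0$, contributing at most $C\,\mathbb{P}(A_k^0)^2$. Because the expected number of meetings of $\tilde S_1,\tilde S_2$ before time $d(k)^2$ is of order $d(k)$ and the conditional separation probability per meeting is $1-e^{-\lambda|s-s'|}$, the probability of separating in the box is of order $1-e^{-c\lambda|s-s'|d(k)}$. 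With $d(k)\asymp \gamma^k/\lambda$ the crossover scale is $|s-s'|\asymp \gamma^{-k}$, and one arrives at
\begin{equation*}
\frac{\mathbb{P}(A_k^s\cap A_k^{s'})}{\mathbb{P}(A_k^0)^2}\leq \begin{cases} C/q_0, & |s-s'|\leq \gamma^{-k},\\ C, & |s-s'|>\gamma^{-k}.\end{cases}
\end{equation*}

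Finally, for fixed $\sigma=|s-s'|$ only the boxes with $k\leq k^\ast(\sigma):=\lfloor \log(1/\sigma)/\log\gamma\rfloor$ contribute a factor $C/q_0$ instead of $C$, so the full product is bounded by $\mathrm{const}\cdot \sigma^{-\alpha}$ with $\alpha=\log(1/q_0)/\log\gamma$. Choosing $\gamma_0=\max(2,1/q_0)$ ensures $\alpha<1$ for all $\gamma>\gamma_0$, so $\int_0^1\!\!\int_0^1 \sigma^{-\alpha}\,ds\,ds'<\infty$ and Paley--Zygmund yields a positive $p$ uniform in $n$ (and in $\lambda\leq\lambda_{\max}$). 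The main obstacle I expect is making the two-point estimate quantitative enough that the multiplicative constant $C$ in the separated regime does not accumulate over the $n+1$ boxes; this is where Lemma~\ref{cont}, Proposition~\ref{principal} and the random walk estimates of the Appendix must be invoked to show that the ratio $\mathbb{P}(A_k^s\cap A_k^{s'})/\mathbb{P}(A_k^0)^2$ really is bounded by a \emph{single} absolute constant uniformly in $k$, rather than by something growing in $k$.
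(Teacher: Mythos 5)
Your second-moment framework, the independence-across-boxes decomposition, and the split of the product into ``small $k$'' versus ``large $k$'' regimes all match the paper's strategy (which applies Cauchy--Schwarz rather than naming Paley--Zygmund, but this is the same thing). The uniform lower bound $\mathbb{P}(A_k^0)\geq q_0>0$ and the choice of $\gamma_0$ in terms of $1/q_0$ are also correct and in the paper.

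However, there is a genuine gap, and it is exactly at the point you flag at the end as ``the main obstacle'': your two-point estimate in the decorrelated regime $|s-s'|>\gamma^{-k}$ is not tight enough. You bound the ratio $\mathbb{P}(A_k^s\cap A_k^{s'})/\mathbb{P}(A_k^0)^2$ by an absolute constant $C$ there, but then the product over the roughly $n-k^*(\sigma)$ boxes with $k>k^*(\sigma)$ is $C^{\,n-k^*(\sigma)}$, which blows up as $n\to\infty$ unless $C=1$. A uniform multiplicative constant simply does not suffice; what you need is a ratio of the form $1+\text{(summable-in-}k\text{ error)}$. The paper obtains precisely this through an \emph{additive} (not multiplicative) decoupling bound, Lemma~\ref{inequality}:
\begin{equation*}
\mathbb{P}(A_1(\delta)\cap A_2(\delta))\leq \mathbb{P}(A_1(\delta))\,\mathbb{P}(A_2(\delta)) + K'\Bigl(\frac{\sqrt{\delta}}{\lambda\,|s-s'|}\Bigr)^{a},
\end{equation*}
with $a>0$. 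Since the box $B_k$ corresponds to $\delta^{-1/2}=d(k)\asymp\gamma^k/\lambda$, the error term is $O\bigl((\gamma^k|s-s'|)^{-a}\bigr)$, i.e.\ geometrically small in $k$ once $k>N_0\approx\log(1/|s-s'|)/\log\gamma$; the ratio becomes $1+O(\gamma^{-a(k-N_0)})$ and the product over $k>N_0$ converges \emph{uniformly} in $|s-s'|$. Proving the additive bound requires real work: the paper shows it via the sticky-walk representation together with Lemma~\ref{lemm1} (a tail bound on the occupation time $\hat L^\delta$), the strong approximation of random walks by Brownian motion, and a Doob maximal inequality. Your heuristic ``separation per meeting'' argument only identifies the crossover scale $|s-s'|\asymp\gamma^{-k}$; it does not by itself establish the quantitative rate of convergence of the ratio to $1$, which is the load-bearing estimate here. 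So the structure of your proposal is right, but the central lemma is missing and the dichotomy you wrote down would not close the argument.
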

Let $E_n$ be the set of times $s$ on $[0,1]$ such that
$\bigcap_{k=0}^n A_k^s$ occurs. The previous lemma implies that
$\mathbb{P}(\bigcap_{n=0}^{\infty}( E_n\neq \emptyset)) \geq p >0$.
Since $\{E_n\}$
is obviously decreasing in $n$, if the $E_n$ were closed
subsets of $[0,1]$ it would follow that $\mathbb{P}(\bigcap_{n=0}^{\infty} E_n
\  \neq \emptyset) \geq p > 0$. As explained in the proof of the next theorem,
for $s \in \bigcap_{n=0}^{\infty} E_n$, $S^s (t) \geq -k -K \sqrt{t}$ for some
$k,K < \infty$ (depending on $\gamma$) and all $t = 0, 1, 2, \dots$.
Unfortunately, the set of times at which one arrow is
(or any finitely many are) oriented to the right (resp., to the left)
is not a closed subset of $[0,1]$ since we have a right continuous process,
and thus $E_n$ is not a closed set. This extra technicality is
handled like in Lemma 5.1 in~\cite{SS05},
as follows. Let $\hat {\cal S}$ denote the (random)
set of all switching times for all $\xi_{i,j}^s$'s.
By modifying every $\xi_{i,j}^s$ so that for $s' \in \hat {\cal S}$,
$\xi_{i,j}^{s'} = +1$ (rather than being right-continuous),
each $E_n$ is replaced by a closed ${\bar E}_n\supseteq E_n$.
On the other hand, $\cap_{n=1}^\infty {\bar E}_n = \cap_{n=1}^\infty E_n$
as a consequence of the fact that
$\hat {\cal S}$ is countable and by independence of the $\xi_{i,j}$'s
no $s' \in \hat {\cal S}$ can be exceptional.

The uniformity with respect to small $\lambda$ in Lemma~\ref{propp} means that once
space and time are diffusively rescaled by $\delta^{-\frac{1}{2}}$
and $\delta^{-1}$ and $\lambda$ is rescaled by $\delta^{\frac{1}{2}}$,
the inequality~(\ref{esd}) is still valid, with $p$
fixed as
$\delta \to 0$.

As a consequence of Lemma~\ref{propp}, we will obtain the following.
\begin{theorem}({\bf violation of the CLT})
\label{violation} For
${\bar \lambda},\delta \in (0,\infty)$, let 
$S^s_\delta(t)=S_{\delta,\bar\lambda}^s(t)=\sqrt\delta S^s([t / \delta])$ 
where $S^s(\cdot)=S^s_{1,\lambda} (\cdot)$ is the path starting at $(0,0)$ 
of the dynamical discrete web with switching rate 
$\lambda=\bar\lambda\sqrt{\delta}$.
There exists $K < \infty$ such that $p_{\delta, \bar\lambda}(K,\bar k)$,
the probability to have a nonempty set of exceptional times s in $[0,1]$
for which $S^s_{\delta,\bar \lambda} (t) \geq -\bar k -K\sqrt{t}$ 
for all $t\geq 0$ satisfies the following:
\begin{enumerate}
\item For any $\bar k > 0$, there exist 
$\bar \lambda_0,\lambda_0 \in (0,\infty)$ such that
\begin{equation}
\label{gg}
\inf_{\bar \lambda\geq \bar \lambda_0, \ 
\delta \leq ({\lambda_0} / {\bar \lambda_0})^2} 
p_{\delta, \bar\lambda}(K,\bar k)>0 \, .
\end{equation}
\item Similarly, for any $\bar \lambda_0$, $\lambda_0 \in(0,\infty)$, 
there exists $\bar k<\infty$ such that (\ref{gg}) is valid.
\item For any fixed $\delta$, $\bar \lambda\in(0,\infty)$, 
$p_{\delta,\bar \lambda}(K,0)>0$.
\end{enumerate}
\end{theorem}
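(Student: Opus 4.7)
The plan is to deduce Theorem~\ref{violation} directly from Lemma~\ref{propp} together with the closed-set modification described in the paragraph following that lemma. Fix $\gamma>\gamma_0$ and use $d(k)=4([\gamma^k/(4\lambda)]+1)$. Lemma~\ref{propp} asserts $\inf_n\mathbb{P}(L(E_n)>0)\ge p$ (with $L$ Lebesgue measure), hence $\mathbb{P}(\forall n:E_n\ne\emptyset)\ge p$. Replacing every $\xi_{i,j}^{s'}$ at $s'\in\hat{\mathcal S}$ by $+1$ produces closed decreasing supersets $\bar E_n\supseteq E_n$ with $\cap_n\bar E_n=\cap_n E_n$ almost surely; since the $\bar E_n$ are closed subsets of the compact interval $[0,1]$, nonemptiness of every $\bar E_n$ gives a nonempty intersection, and therefore $\mathbb{P}(\cap_n E_n\ne\emptyset)\ge p>0$, with $p$ bounded away from $0$ as long as $\lambda$ stays bounded.

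For any $s\in\cap_n E_n$ the path $S^s$ sits to the right of the left edge of every box $B_k$ on the time window $[t_k,t_{k+1}]$, so $S^s(t)\ge x_k-d(k)/2$ there. Using the geometric-series asymptotics $\sum_{j<k}d(j)=\gamma^k/((\gamma-1)\lambda)+O(k)$ and $t_k=\gamma^{2k}/((\gamma^2-1)\lambda^2)+O(k\gamma^{2k}/\lambda)$, a short algebraic computation gives
$x_k-d(k)/2\ge -K(\gamma)\sqrt{t_k}-c_0/\lambda$
with $K(\gamma)=\tfrac{\gamma-2}{2}\sqrt{(\gamma+1)/(\gamma-1)}$ up to lower-order terms and $c_0$ an absolute constant absorbing the integer-part rounding in the definition of $d(k)$ together with the initial window $t\in[0,t_1]$ where the walker can reach the left edge of $B_0$. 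Since $\sqrt{t}$ is comparable to $\sqrt{t_k}$ throughout $[t_k,t_{k+1}]$ (their ratio is at most $\gamma$), this yields $S^s(t)\ge -c_0/\lambda-K(\gamma)\sqrt{t}$ for every integer $t\ge 0$ and every exceptional $s$.

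Passing to the rescaled process $S^s_\delta(t)=\sqrt\delta\,S^s([t/\delta])$ with $\lambda=\bar\lambda\sqrt\delta$ turns this into $S^s_\delta(t)\ge -c_0/\bar\lambda-K(\gamma)\sqrt{t}$, uniformly in $\delta$, and the Lemma~\ref{propp} lower bound on $p$ persists so long as $\lambda=\bar\lambda\sqrt\delta\le\lambda_0$ for some fixed $\lambda_0$ at which $p$ is still positive. Part~1 of the theorem then follows by choosing $\bar\lambda_0$ so that $c_0/\bar\lambda_0\le\bar k$ and taking $\lambda_0$ at such a positive-$p$ level: inside the window in~(\ref{gg}) one has both $c_0/\bar\lambda\le\bar k$ and $\lambda\le\lambda_0$. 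Part~2 is symmetric, fixing the parameter window and taking $\bar k$ large enough that $c_0/\bar\lambda\le\bar k$. Part~3 is immediate since for $\bar k=0$ and fixed $\delta,\bar\lambda$, the desired bound holds trivially for $t\in[0,\delta)$ (where $S^s_\delta\equiv 0$) and for $t\ge\delta$ after possibly enlarging $K$ by a factor of order~$1$. The main obstacle is neither the geometric bookkeeping nor the rescaling but rather the closure step converting the Lemma~\ref{propp} mass bound into a nonempty intersection; once the Schramm--Steif trick already sketched in the text between the lemma and the theorem is in place, the full analytic content of Theorem~\ref{violation} is encoded in Lemma~\ref{propp}.
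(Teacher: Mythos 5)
Your treatment of parts~1 and~2 essentially reproduces the paper's argument: you derive the boundary comparison from the geometry of the boxes, arrive at the same constant $K(\gamma)=\frac{\gamma-2}{2}\sqrt{\frac{\gamma+1}{\gamma-1}}$, and your $c_0/\bar\lambda$ plays the role of the paper's explicit requirement $(2\bar k-4\sqrt\delta)\bar\lambda-1\geq0$. The closure step is also handled correctly. These parts are fine.

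Part~3 is a genuine gap. You write that for $t\geq\delta$ the desired bound ``holds after possibly enlarging $K$ by a factor of order~$1$,'' but this does not work. First, $K$ is a single universal constant fixed in the preamble of the theorem, before any of the three claims; you cannot re-choose it for claim~3. Second, and more fundamentally, even if $K$ could be enlarged, your bound from Lemma~\ref{propp} has the form $S^s_\delta(t)\geq -c_0/\bar\lambda - K(\gamma)\sqrt{t}$, so to swallow the constant offset into $K\sqrt{t}$ for all $t\geq\delta$ you would need $K\geq K(\gamma)+c_0/(\bar\lambda\sqrt{\delta})$, which blows up as $\delta\to0$ or $\bar\lambda\to0$. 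Since claim~3 is ``for any fixed $\delta,\bar\lambda$'' with the same $K$, this approach fails. The paper instead proves claim~3 by a translation argument, not by absorbing constants: one first picks $j$ large enough (via claim~2) so that $p_{1,\bar\lambda}(K,j)>0$, then observes that on the event $\{\forall\ s\in[0,1],\ \forall\ m<j,\ \xi_{m,m}^s=+1\}$ the path from the origin rides up the diagonal to $(j,j)$, so $T_0^{[0,1]}$ contains the intersection of that event with (a superset of) the translate of $T_j^{[0,1]}$. The two ingredients are independent, each has positive probability, and the product is positive. This is the piece of the proof your write-up is missing.
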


\begin{proof}
In the unrescaled coordinates, we take boxes $B_k$ as in Lemma~\ref{propp}
with $d(k)=4([\frac{\gamma^k}{4\lambda}]+1) \in 
(\gamma^k/\lambda,4+\gamma^k/\lambda]$.
Then in rescaled coordinates we have
boxes ${\bar B}_k$ with (spatial) width 
${\bar d}(k) = (\lambda/ \bar\lambda) d(k) 
\in(\gamma^k/\bar\lambda,4\sqrt\delta+\gamma^k/\bar\lambda]$ 
and (temporal) height ${\bar d}(k)^2$.
Let ${\bar \partial}$ denote the right-continuous function obtained by joining
together the left boundaries ${\bar \partial}_k$ of ${\bar B}_k$.
On $[{\bar t}_n,{\bar t}_{n+1})$ with
${\bar t}_n = \bar d(0)^2+\bar d(1)^2+\dots \bar d({n-1})^2$,
we have
${\bar \partial}(t) = {\bar \partial}({\bar t}_n) =
{\bar x}_n - (1/2) {\bar d}(n)$ $=
(\bar d(0)+\bar d(1)+\dots+\bar d(n-1)-\bar d(n))/2$. 
If $K,\bar k$ are such that
\begin{equation}
\label{boundary}
{\bar \partial}({\bar t}_n) \ \geq \ - (\bar k+K \sqrt{{\bar t}_n}) \
\textrm{for} \  n = 0, 1, 2, \dots \ ,
\end{equation}
then we will have ${\bar \partial}(t) \geq -(\bar k+K \sqrt{t})$
for all $t \geq 0$ as desired. 

The inequality (\ref{boundary}) can be rewritten as 
\begin{equation}
\label{gggg}
\bar d(n)\leq 2\bar k+\bar d(0)+\cdots+
\bar d(n-1)+2K[{\bar d(0)}^2+\cdots+{\bar d(n-1)}^2]^{1/2} \, . 
\end{equation}
Using the bound $\bar d(n)\leq 4\sqrt{\delta}+\gamma^n/\bar\lambda$ on the 
left-hand side of (\ref{gggg}) and the bounds 
$\bar d(j)\geq\gamma^j/\bar\lambda$ on the right-hand 
side, it follows that in order to verify~(\ref{gggg})
it suffices to have, for $n=0,1,2,\dots$,
\begin{equation}
\label{anotherineq}
\gamma^n\leq (2 \bar k-4\sqrt{\delta})\bar\lambda +
\frac{\gamma^n-1}{\gamma-1}+2K\sqrt{\frac{\gamma^{2n}-1}{\gamma^2-1}}\ .
\end{equation}
Using the elementary bound
$\sqrt{\gamma^{2n} - 1} \geq \gamma^n(1 - \gamma^{-2n})$ (for $\gamma \geq 1$),
we see that in order to verify~(\ref{anotherineq}),
it suffices to have, for $n=0,1,2,\dots$,
\begin{equation}
\gamma^n(\frac{\gamma-2}{\gamma-1} - \frac{2K}{\sqrt{\gamma^2 -1}})
\leq (2 \bar k-4\sqrt{\delta})\bar\lambda -
\frac{1}{\gamma-1}- \frac{2K}{\sqrt{\gamma^2-1}}\gamma^{-n} \, .
\end{equation}
Choosing $K=(\frac{\gamma-2}{2}) \sqrt{\frac{\gamma+1}{\gamma-1}}$ 
yields this inequality provided $(2\bar k-4\sqrt{\delta})\bar\lambda-1\geq0$.
It is easy to see that for any $\bar k>0$, this will be valid provided 
$\lambda_0$ is small enough and $\bar \lambda_0$ is large enough so that 
$\bar k\geq 2\lambda_0/\bar\lambda_0+1/(2\bar\lambda_0)$. 
This and Lemma~\ref{propp} prove 
the first claim of the theorem; the second claim, in which $\bar \lambda_0$ 
and $\lambda_0$ are given, follows similarly.

We now turn to the proof of the final claim. We set $\delta=1$
since essentially the same proof works for any $\delta>0$. Let 
$T_m^{[0,1]}$ 
denote the set of $s\in [0,1]$ such that 
$S^s(n)\geq-m-K\sqrt{n}$ for $n \geq 0$ and 
let $j$ be an integer so large that 
(by the second claim of the theorem) $p_{1,\bar \lambda}(K,j)>0$. First, 
$T_0^{[0,1]} \supset \hat T_{j}^{[0,1]}\bigcap
\{s\in [0,1]:\xi_{m,m}^s =+1 \ \ \textrm{for} \ \ m<j\}$ 
where $\hat T_{j}^{[0,1]}$ is the set of $s\in [0,1]$ 
such that $S^s_{(j,j)}(n)\geq-K\sqrt{n}$ 
for $n\geq j$. 
Furthermore, $\hat T_{j}^{[0,1]} \supset \bar T_{j}^{[0,1]}$, 
where $\bar T_{j}^{[0,1]}$ is the 
set of $s\in [0,1]$ such that 
$S^s_{(j,j)}(n)-j\geq-j -K \sqrt{n-j}$ for $n\geq j$. 
But $\bar T_{j}^{[0,1]}$ is just the translation (from $(0,0)$ to $(j,j)$) of 
$T_{j}^{[0,1]}$. Since $\{s\in [0,1]: \forall \  k<j, \ \xi_{k,k}^s=+1\}$ and 
$\bar T_j^{[0,1]}$
are independent, it follows that
\begin{equation}
p_{1,\bar\lambda}(K,0)\ \geq \ p_{1,\bar\lambda}(K,j) \ \ 
\mathbb{P} (\forall \ s\in[0,1], \ \forall \  k<n, \ \ \xi_{k,k}^s=+1)>0 \, .
\end{equation}
\end{proof}

In particular, if we assume that $s\rightarrow W_{\delta}^s$ converges
to the dynamical Brownian web (see Subsection~\ref{continuum})
in some appropriate sense as $\delta \to 0$, this shows
that the analogue of Theorem~\ref{tameness} (except for the final claim with $\delta$ fixed and $\bar k=0$) will be valid for the continuum
model as well.

\bigskip

We now turn to:

\noindent {\it Proof of Lemma~\ref{propp}.} By the
Cauchy-Schwarz inequality,
\begin{eqnarray}
\mathbb{P}(\int_0^1 \prod_{k=0}^n 1_{A_k^s} \ ds>0) & \geq &
\frac{\left(\mathbb{E}\left[\int_0^1\prod_{k=0}^n 1_{A_k^s} \ ds\right]\right)^2}
{\mathbb{E}\left[\left(\int_0^1 \prod_{k=0}^n 1_{A_k^s} \ ds \right)^2\right]} \\
& = &\left(\left[\int_0^1 \int_0^1 \prod_{k=0}^n
\frac{\mathbb{P}(A_k^s \bigcap A_k^{s'})}{\mathbb{P}(A_k)^2} \ ds \ ds' \right]
\right)^{-1} \label{integrand}
\end{eqnarray}
where $A_k = A_k^0$ and the equality is a consequence of the stationarity
of $s\rightarrow W^s$ and the independence between the different boxes $B_k$.
It is enough to show that the integrand in
the last expression of~(\ref{integrand}) is bounded above by a integrable 
function  on $[0,1]\times[0,1]$, uniformly in $n$. The rest of the proof 
will verify this property.

Now, for fixed
$k$ and two deterministic times $(s,s')$, let us rescale space and time
respectively by $\delta^{-1/2} ={d(k)}$ and $\delta^{-1}=d(k)^2$.
Also, let $\tilde S_1^{\delta},\tilde S_2^{\delta}$ be the paths starting at
$1/2$ at time $0$, defined as the rescaled and translated version of the
paths $(S_1,S_2)\in(W^s,W^{s'})$ starting at $z_k$, the middle point of the
lower segment of the box $B_k$.  $(\tilde S_1^{\delta},\tilde S_2^{\delta})$
is a pair of sticky rescaled random walks starting at $1/2$ at time $t=0$
whose  statistics (up to a translation of starting point)
are described in Equations~(\ref{stat1})-(\ref{stat2}).

By definition,
$\mathbb{P}(A_k^s\cap A_k^{s'})=\mathbb{P}(\textrm{for} \ \ i=1,2,
\ \ \tilde S_i^\delta(1)>1 \ \ \textrm{and} \ \
\inf_{t \in [0,1]} \ \tilde S_i^\delta (t) >0)$.
To complete the proof of Lemma~\ref{propp}, we
will use the following lemma, in which $\delta^{-1/2}$ 
may be taken as an integer divisible by $4$.

\begin{lemma}
\label{inequality}
Let $\tilde S_1^{\delta},\tilde S_2^{\delta}$ be a pair of sticky
random walks starting from $1/2$ at time 
$t=0$ as defined in (\ref{stat1})-(\ref{stat2}).
Let $A_i = A_i(\delta)$ be the event that for $i=1,2$,
$\tilde S_i^\delta(1)\geq1$ and $\inf_{t \in [0,1]} \ \tilde S_i^\delta (t) \geq0$.
If $\lambda \ |s-s'| \leq 1$, then for $\frac{\sqrt\delta}{\lambda \ |s-s'|}$
small enough,
\begin{equation}
\label{lemmainequality}
\mathbb{P}(A_1(\delta)\cap A_2(\delta))\leq \mathbb{P}(A_1(\delta))
\ \mathbb{P}(A_2(\delta)) + K' \left(\frac{\sqrt\delta }{\lambda \ |s-s'|}\right)^{a}
\end{equation}
where $K'$ and $a$ are positive constants 
(independent of $\lambda,\, s,\, s'$ and $\delta$).
\end{lemma}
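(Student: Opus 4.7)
The plan is to couple the sticky pair $(\tilde S_1^\delta, \tilde S_2^\delta)$ with a genuinely independent pair of rescaled random walks extracted from the decomposition~(\ref{stat1})--(\ref{stat2}) by simply dropping the time change: set $\check S_i^\delta(t) := \frac12 + S_i^\delta(t)$, so that $\check S_1^\delta$ and $\check S_2^\delta$ are independent rescaled random walks starting from $\frac12$, each with the same marginal law as $\tilde S_i^\delta$. Write $\rho := \sqrt\delta/(\lambda|s-s'|)$. Subtracting~(\ref{stat1})--(\ref{stat2}),
\[
\tilde S_i^\delta(t) - \check S_i^\delta(t) \;=\; -\bigl[S_i^\delta(t) - S_i^\delta(C^\delta(t))\bigr] + S_3^\delta(t - C^\delta(t)),
\]
so the coupling error is controlled by the total ``stuck'' time $U^\delta := 1 - C^\delta(1)$ and by moduli of $S_1^\delta, S_2^\delta, S_3^\delta$ at scale $U^\delta$.

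The key a priori bound is $\mathbb{E}[U^\delta] \leq K\rho$. Since $C^\delta(1) \leq 1$, $U^\delta \leq \hat L^\delta(1) = \delta \sum_{i=1}^{\hat\ell^\delta(1)\delta^{-1/2}} T_i$; the i.i.d.\ $T_i$ satisfy $\mathbb{E}[T_i] = 1/(e^{\lambda|s-s'|}-1) \leq 1/(\lambda|s-s'|)$, and $\mathbb{E}[\hat\ell^\delta(1)] = O(1)$ by the usual local-time estimate for the lazy walk $S_1^\delta - S_2^\delta$ at $0$. Markov's inequality gives $\mathbb{P}(U^\delta > \eta) \leq K\rho/\eta$. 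Now let $M_\eta^i := \sup\{|S_i^\delta(u)-S_i^\delta(v)|: u,v \in [0,1],\, |u-v|\leq\eta\}$, $N_\eta := \sup_{u \in [0,\eta]}|S_3^\delta(u)|$, and define
\[
G \;=\; \{U^\delta \leq \eta\} \,\cap\, \bigl\{\max(M_\eta^1, M_\eta^2, N_\eta) \leq \tfrac\epsilon 2\bigr\}.
\]
Standard sub-Gaussian maximal estimates yield $\mathbb{P}(\max(M_\eta^1, M_\eta^2, N_\eta) > \tfrac\epsilon 2) \leq K e^{-c\epsilon^2/\eta}$ for $\epsilon \gg \sqrt\delta$. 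On $G$, $\sup_t|\tilde S_i^\delta(t) - \check S_i^\delta(t)| \leq \epsilon$, so $A_i(\delta) \cap G \subseteq \check A_i^{-\epsilon} \cap G$ and $\check A_i^{+\epsilon} \cap G \subseteq A_i(\delta)$, where $\check A_i^{\pm\epsilon} := \{\check S_i^\delta(1) \geq 1 \mp \epsilon,\ \inf_{[0,1]} \check S_i^\delta \geq \mp\epsilon\}$; crucially, $\check A_1^{\pm\epsilon}$ and $\check A_2^{\pm\epsilon}$ are independent.

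It follows that $\mathbb{P}(A_1 \cap A_2) \leq \mathbb{P}(\check A_1^{-\epsilon})\mathbb{P}(\check A_2^{-\epsilon}) + \mathbb{P}(G^c)$ and $\mathbb{P}(\check A_i^{+\epsilon}) \leq \mathbb{P}(A_i) + \mathbb{P}(G^c)$. The symmetric difference $\check A_i^{-\epsilon} \setminus \check A_i^{+\epsilon}$ is contained in $\{\check S_i^\delta(1) \in [1-\epsilon,1+\epsilon)\} \cup \{\inf_{[0,1]}\check S_i^\delta \in [-\epsilon,\epsilon)\}$; by the local CLT and by the reflection-principle bound on the density of the minimum of a simple random walk near a fixed value, both pieces have probability $O(\epsilon)$ provided $\epsilon \gg \sqrt\delta$. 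Combining yields $\mathbb{P}(A_1 \cap A_2) \leq \mathbb{P}(A_1)\mathbb{P}(A_2) + O(\mathbb{P}(G^c) + \epsilon)$. Choosing $\eta = \rho^{1/2}$ and $\epsilon = \rho^{1/8}$ gives (\ref{lemmainequality}) with $a = 1/8$, since $\rho \geq \sqrt\delta$ forces $\epsilon \geq \delta^{1/16} \gg \sqrt\delta$ for small $\delta$. The main technical obstacle is the anti-concentration step: one must keep $\epsilon$ comfortably above the lattice spacing uniformly in $\delta$, which the choice of exponents accomplishes; with more care the exponent $a$ could presumably be improved.
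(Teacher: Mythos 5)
Your argument is essentially the same as the paper's: bound the stuck time $1-C^\delta(1)$ by $\hat L^\delta(1)$ and apply Markov together with $\mathbb{E}[\hat\ell^\delta(1)]=O(1)$ (the content of the paper's Lemma~\ref{lemm1}), then use a modulus-of-continuity estimate for $S_1^\delta,S_2^\delta,S_3^\delta$ to compare the sticky pair to the independent pair $(\tfrac12+S_1^\delta,\tfrac12+S_2^\delta)$, and finally an anti-concentration bound near the boundary (the paper invokes Fraser's strong approximation where you use the local CLT and reflection principle, a cosmetic difference). One small correction: your $\pm$ convention is inverted — $\check A_i^{-\epsilon}$ as you defined it is the \emph{smaller} event, so the containments should read $A_i\cap G\subseteq\check A_i^{+\epsilon}$ and $\check A_i^{-\epsilon}\cap G\subseteq A_i$, and the set you bound by anti-concentration is $\check A_i^{+\epsilon}\setminus\check A_i^{-\epsilon}$ (your $\check A_i^{-\epsilon}\setminus\check A_i^{+\epsilon}$ is empty); with that swap the argument goes through.
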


\begin{proof}
In this proof we set $\Delta=\frac{\sqrt\delta}{\lambda \ |s-s'|}$.
For any positive $\alpha$,
let $S_i^{\delta}$ be as in (\ref{stat1})-(\ref{stat2}) and let
$\inf S_i^\delta\equiv \inf_{t\in [0,1]} S_i^\delta(t)$. Then
\begin{align}
\label{equation1}
\mathbb{P}(A_1\cap A_2)& \leq
\mathbb{P}(\, {\textrm for} \  i=1,2, \ \ \ S_i^\delta(1) \geq 1-\Delta^{\alpha},
\ \ \ \inf S_i^\delta \geq -\Delta^{\alpha}) & \, \nonumber \\
&  +\sum_{i=1}^2 \{ \mathbb{P}(A_1\cap A_2,S_i^\delta(1)<1-\Delta^{\alpha})
+ \mathbb{P}(A_1\cap A_2, \, \inf S_i^\delta <-\Delta^{\alpha}) \} \, . &
\end{align}

We start by
dealing with the first term of the right-hand side of~(\ref{equation1}).
First,
\begin{eqnarray}
\label{90}
\mathbb{P}(\, {\textrm for} \   i=1,2, \ \ \ S_i^\delta(1) \geq 1-\Delta^{\alpha},
\ \ \
\inf S_i^\delta \geq -\Delta^{\alpha})
\leq \mathbb{P}(A_1(\delta))\mathbb{P}(A_2(\delta)) \nonumber \\
+2 \mathbb{P}(S_1^\delta(1)\in[1-\Delta^\alpha,1])
+2 \mathbb{P}(\inf S_1^\delta\in[-\Delta^\alpha,0])
\end{eqnarray}
using the independence of the walks $S_i^\delta$
and the equidistribution of ${\tilde S}_1^\delta, {\tilde S}_2^\delta,
S_1^\delta, S_2^\delta$.
The last two terms can be dealt 
with in a number of ways. For example, in~\cite{F73}, it is proved that a 
sequence of  rescaled standard random walks $\{S_i^{\delta}\}_{\delta}$ 
and a Brownian Motion $\hat B$ can be constructed on the same probability
space in such way that for for any $a<\frac{1}{4}$ the quantity 
$\mathbb{P}(\sup|\hat B-S^\delta|>\delta^a)$ goes to $0$ faster than any 
power of $\delta$. On this probability space,
\begin{eqnarray}
\label{91}
\mathbb{P}(S_i^\delta(1)\in[1-\Delta^\alpha,1])
\leq \mathbb{P}( {\hat B}(1) \in [1- 2 \Delta^\alpha,1+\Delta^\alpha])+
\mathbb{P}(\sup|\hat B-S_i^\delta|>\Delta^\alpha) \, , \\
\mathbb{P}(\inf S_i^\delta\in[-\Delta^\alpha,0])\leq \mathbb{P}(
\inf {\hat B} \in [- 2 \Delta^\alpha,\Delta^\alpha])+ 
\mathbb{P}(\sup|\hat B-S_i^\delta|>\Delta^\alpha) \, , \label{es2}
\end{eqnarray}
where the sup (and inf) are over $t \in [0,1]$. 
Since $\lambda|s-s'|\leq1$, we have $\sqrt{\delta} 
\leq \Delta$, implying that for $\alpha<\frac{1}{2}$ and $\delta$ small 
enough the last terms on the right-hand side
of (\ref{91}) and (\ref{es2}) are bounded by 
$O(\sqrt{\delta})$, and consequently by $O(\Delta)$. Finally, 
(\ref{90}), (\ref{91}) and (\ref{es2}) yield:
\begin{equation}
\mathbb{P}(\, {\textrm for} \   i=1,2, \ \ \ S_i^\delta(1) \geq 1-\Delta^{\alpha},
\ \inf S_i^\delta \geq -\Delta^{\alpha})\leq
\mathbb{P}(A_1) \ \mathbb{P}(A_2) + K' \ \Delta^\alpha
\end{equation}
where $K'$ is a positive constant and $\alpha<\frac{1}{2}$.

It only remains to deal with the rest of the terms on the
right-hand side of Equation~(\ref{equation1}). We will prove that
$\mathbb{P}(A_1(\delta)\cap A_2(\delta),S_1^\delta(1)<1-\Delta^{\alpha})
\leq K'' \Delta^{a'}$;
the other terms can be treated in a similar fashion.

For any $\beta>0$, we have
\begin{eqnarray}
\mathbb{P}(A_1(\delta)\cap A_2(\delta),S_1^\delta(1)<1-\Delta^{\alpha})
\leq
\mathbb{P}(\tilde S_1^\delta(1)\geq 1,S_1^\delta(1)<1-\Delta^{\alpha},
{\hat L}^\delta(1)\leq {\Delta^{\beta}})
\nonumber \\
+\mathbb{P}({\hat L}^
\delta(1)>{\Delta^{\beta}})
\end{eqnarray}
Lemma~\ref{lemm1} below takes care of the second term on the right-hand side
of the inequality when $0<\beta< 1$.
On the other hand, since
\begin{equation}
\tilde S_1^\delta(t)=S_1^\delta (t)+
(S_1^\delta(C^\delta(t))-S_1^\delta(t))+S_3^\delta(t-C^\delta(t)),
\end{equation}
we have that
\begin{eqnarray}
\mathbb{P}(\tilde S_1^\delta(1)\geq 1, S_1^\delta(1)<1-\Delta^{\alpha},
{\hat L}^\delta(1)\leq{\Delta^{\beta}}) \leq  \ \ \ \ \ \nonumber \\
\mathbb{P}(|S^\delta_3 (1-C^\delta(1))| \geq \frac{\Delta^\alpha}{2},
{\hat L}^\delta(1)\leq{\Delta^{\beta}})
+\mathbb{P}(|S_1^\delta(1)-S_1^\delta(C^\delta(1))|
\geq \frac{\Delta^\alpha}{2}, {\hat L}^\delta(1)\leq {\Delta^{\beta}})\, .
\end{eqnarray}
Now, on the event
$\{{\hat L}^\delta(1) \leq {\Delta^{\beta}}\}$,
by definition of $C^\delta(t)$, we have for any $t'\in[0,1]$:
\begin{equation}
(C^\delta)^{-1}(t')\leq t' +\Delta^{\beta}.
\end{equation}
Since $C^\delta(t)\leq t$ and $C^\delta$ is an increasing function, it follows that
\begin{equation}
C^\delta(t)\geq t-\Delta^{\beta}
\end{equation}
implying
\begin{eqnarray}
\mathbb{P}(\tilde S_1^\delta(1)\geq 1 ,S_1^\delta(1)<1-\Delta^{\alpha},
{\hat L}^\delta(1)\leq{\Delta^{\beta}}) \leq \\
\mathbb{P}(\sup_{t \in [0,\Delta^{\beta}]}|S_3^\delta (t) |\geq 
\frac{\Delta^\alpha}{2})
+\mathbb{P}(\sup_{t \in [1-\Delta^{\beta},1]}|S_1^\delta(1)-S_1^\delta(t)|
\geq\frac{\Delta^{\alpha}}{2}) \, .
\end{eqnarray}

By (the $L^2$ version of) Doob's inequality, we then have
\begin{equation}
\mathbb{P}(\tilde S_1^\delta(1)\geq 1 ,S_1^\delta(1)<1-\Delta^{\alpha},
{\hat L}^\delta(1)\leq{\Delta^{\beta}}) \leq \bar K \Delta^{\beta- 2 \alpha }.
\end{equation}
Therefore, taking $\alpha<\beta / 2$ with $0<\beta<1$ gives the desired
bound for the second term of the right-hand side of inequality~(\ref{equation1}).
For the first term of this inequality, we only needed 
 $\alpha\in(0,1/2)$ and
the conclusion follows.
\end{proof}

\begin{lemma}
\label{lemm1}
For any $1 >\beta>0$ and
$\Delta=\frac{\sqrt \delta}{\lambda |s-s'|}$ small enough
\begin{equation}
\label{lemma3bound}
\mathbb{P}({\hat L}^\delta(1) \geq {\Delta^\beta})\leq
\tilde K \Delta^{1-\beta}
\end{equation}
where $\tilde K >0$.
\end{lemma}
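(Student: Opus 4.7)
The plan is to bound $\mathbb{E}[\hat L^\delta(1)]$ by a constant multiple of $\Delta$ and then apply Markov's inequality. By the construction in Section~\ref{interaction}, $\hat L^\delta(1) = \delta \sum_{i=1}^N T_i$ where $N = \hat l^\delta(1)\delta^{-1/2} = \#\{k \leq \delta^{-1}: S_1^\delta(k\delta) = S_2^\delta(k\delta)\}$ is an integer-valued random variable measurable with respect to the two independent walks $(S_1^\delta, S_2^\delta)$, while the i.i.d.\ sticky times $\{T_i\}$ are independent of these walks. Conditioning on $N$ and using this independence yields
\begin{equation*}
\mathbb{E}[\hat L^\delta(1)] \;=\; \delta\, \mathbb{E}[N]\, \mathbb{E}[T_1] \;=\; \sqrt\delta\, \mathbb{E}[\hat l^\delta(1)]\, \mathbb{E}[T_1].
\end{equation*}

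Each of the two remaining expectations admits a clean bound. First, summing the tails gives $\mathbb{E}[T_1] = \sum_{k \geq 1} e^{-k\lambda|s-s'|} = (e^{\lambda|s-s'|}-1)^{-1}$, which is at most $2/(\lambda|s-s'|)$ whenever $\lambda|s-s'| \leq 1$, by the elementary inequality $1-e^{-x} \geq x/2$ on $[0,1]$. Second, the unrescaled difference $S_1 - S_2$ is a lazy symmetric walk on $2\mathbb{Z}$ starting from $0$, so by the local central limit theorem (or the explicit identity $\mathbb{P}(S_1(k) = S_2(k)) = \binom{2k}{k}/4^k \sim 1/\sqrt{\pi k}$) one has $\mathbb{P}(S_1^\delta(k\delta) = S_2^\delta(k\delta)) \leq C/\sqrt{k \vee 1}$, whence
\begin{equation*}
\mathbb{E}[\hat l^\delta(1)] \;=\; \sqrt\delta \sum_{k=0}^{\lfloor 1/\delta\rfloor} \mathbb{P}\bigl(S_1^\delta(k\delta) = S_2^\delta(k\delta)\bigr) \;\leq\; C'\sqrt\delta\,\sqrt{1/\delta} \;=\; C'.
\end{equation*}

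Combining the three estimates gives $\mathbb{E}[\hat L^\delta(1)] \leq C''\sqrt\delta/(\lambda|s-s'|) = C''\Delta$, so Markov's inequality directly yields
\begin{equation*}
\mathbb{P}\bigl(\hat L^\delta(1) \geq \Delta^\beta\bigr) \;\leq\; \frac{\mathbb{E}[\hat L^\delta(1)]}{\Delta^\beta} \;\leq\; C''\,\Delta^{1-\beta},
\end{equation*}
which is the claim with $\tilde K = C''$. None of the steps presents a real obstacle: the key identity $\mathbb{E}[\sum_{i=1}^N T_i] = \mathbb{E}[N]\,\mathbb{E}[T_1]$ follows immediately from the independence built into the representation of Section~\ref{interaction}, and the geometric mean bound together with the $O(1/\sqrt k)$ coincidence probability for two independent simple random walks are standard. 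The structural point is that the factor $\mathbb{E}[T_1]$ contributes the $1/(\lambda|s-s'|)$ while $\sqrt\delta\,\mathbb{E}[\hat l^\delta(1)]$ contributes only $\sqrt\delta$, giving precisely the order $\Delta$ in expectation that Markov then converts into the stated $\Delta^{1-\beta}$ tail.
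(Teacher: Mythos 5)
Your proof is correct and follows essentially the same route as the paper: Markov's inequality applied to $\mathbb{E}[\hat L^\delta(1)]$, factored via Wald's identity into $\sqrt\delta\,\mathbb{E}[\hat l^\delta(1)]\,\mathbb{E}[T_1]$, with $\mathbb{E}[T_1]=O\bigl((\lambda|s-s'|)^{-1}\bigr)$ and $\mathbb{E}[\hat l^\delta(1)]=O(1)$. The only (slight) difference is that you bound $\mathbb{E}[\hat l^\delta(1)]$ directly via the collision-probability estimate $\binom{2k}{k}/4^k \sim 1/\sqrt{\pi k}$, which is actually cleaner than the paper's appeal to convergence in distribution to Brownian local time (which, on its own, would not control expectations without a uniform-integrability argument).
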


\begin{proof}
By the Markov inequality,
\begin{eqnarray}
\mathbb{P}({\hat L}^{\delta}(1)\geq{\Delta^{\beta}})\leq \delta^{\frac{1}{2}(1-\beta)}
\mathbb{E}(T_1)\mathbb{E}({\hat l}^{\delta}(1)) |s-s'|^\beta \lambda^\beta \\
\textrm{with $\mathbb{E}(T_1)=\sum_{k=1}^\infty e^{-\lambda \ |s-s'| \ k}=
\frac{\exp(-\lambda |s-s'|)}{1-\exp(-\lambda |s-s'|)}$} \ .
\end{eqnarray}
Since ${\hat l}^{\delta}(1)$ converges in distribution to the local time
of $\sqrt{2} B$, where $B$ is a standard Brownian motion,
$\mathbb{E}({\hat l}^{\delta}(1))$ is uniformly bounded in $\delta$. Furthermore,
$E(T_1)=0(\lambda |s-s'|)^{-1}$, implying that
\begin{equation}
\mathbb{P}({\hat L}^{\delta}(1)\geq{\Delta^{\beta}})\leq \tilde K 
\left(\frac{\sqrt \delta}{\lambda |s-s'|}\right)^{1-\beta}.
\end{equation}


\end{proof}

\noindent {\bf Completion of proof of Lemma~\ref{propp}.} \\
Recall that 
$d(k)= \ 4([\frac{\gamma^k}{4 \lambda}]+1) \geq \gamma^k / \lambda$,
where $\gamma>\gamma_0>2$ with $\gamma_0$
to be fixed later. By Lemma~\ref{inequality},
there exists $m$ small enough such that (\ref{lemmainequality}) is
valid for $\frac{\sqrt{\delta}}{\lambda \ |s-s'|}\leq m$. We define
$N_0 = [\frac{-\log(m|s-s'|)}{\log{\ \gamma}}]$
so that for $k>N_0$, (\ref{lemmainequality}) is
valid for $A_k^s$ and $A_k^{s'}$. $N_0$ is independent of~$\lambda$
and  since 
$m\geq (\gamma^{N_0+1}\ |s-s'|)^{-1}\ $,  
\begin{eqnarray}
\label{final1}
\prod_{k=N_0+1}^{\infty} \left(
\frac{\mathbb{P}(A_k^s \cap A_k^{s'})}{\mathbb{P}(A_k)^2} \right) & \leq &
\prod_{k=N_0+1}^\infty (1+ \frac{K' / \mathbb{P}(A_k)^2 }
{ |s-s'|^a\gamma^{a(N_0+1)} \ \ \gamma^{a(k-N_0-1)}}) \nonumber \\
& \leq &\prod_{k=N_0+1}^{\infty} (1+ \frac{K' m^a}{\inf _n\mathbb{P}(A_n)^2}
\ \ \frac{1}{\gamma^{a(k-N_0-1)}})
\end{eqnarray}
where 
$a$ and $K'$ are as in Lemma~\ref{inequality}. The right-hand side of
(\ref{final1}) is independent of $\lambda$ and
$|s-s'|$ and is finite. Indeed, $0<\inf_n P(A_n)$  since the
boxes $B_k$ have diffusively scaled  sizes and therefore 
$\mathbb{P}(A_k^s)\rightarrow\mathbb{P}(A)$ as $k \to \infty$, where $A$ is the
event that a Brownian motion
${\hat B}(t)$ starting at $\frac{1}{2}$ at
time 0 has ${\hat B}(1)>1$ and $\inf_{t\in[0,1]} {\hat B}(t)>0$.

On the other hand,
\begin{eqnarray}
\label{final2}
\prod_{k=0}^{N_0} \frac{\mathbb{P}(A_k^s \cap A_k^{s'})}
{\mathbb{P}(A_k)^2} & \leq & (\sup_k \frac{1}{\mathbb{P}(A_k)})^{N_0} \nonumber \\
& \leq & \exp(\frac{\log{\sup_k
\frac{1}{\mathbb{P}(A_k)}}}{\log{\gamma}}\log(\frac{1}{m |s-s'|}))=
\frac{1}{m^b|s-s'|^b} \, ,
\end{eqnarray}
where $b=\frac{\log{\sup_k (1/ \mathbb{P}(A_k))}}{\log{\gamma}}\,$.

Taking $\gamma>\sup_k \frac{1}{\mathbb{P}(A_k)}$,
by (\ref{final1}) and (\ref{final2}) we have that for every $n$ 
\begin{eqnarray}
\label{energy}
 \prod_{k=0}^n
\frac{\mathbb{P}(A_k^s \cap A_k^{s'})}{\mathbb{P}(A_k)^2} 
\leq {\tilde K}' \frac{1}{|s-s'|^b} \, ,  
\end{eqnarray}
with ${\tilde K}'>0$ and $b<1$. 
Since $(s,s')\rightarrow |s-s'|^{-b} \in L^1([0,1]\times[0,1])$) 
and (\ref{energy}) is uniform in
$n$, this concludes the proof of Lemma~\ref{propp}. 

\section{Hausdorff Dimension Of Exceptional Times}
\label{hausdorff}
In this section, we derive some lower and upper bounds for the set of
exceptional dynamical times $s \in [0,\infty)$. To simplify notation,
the rate of switching $\lambda$ and the scaling parameter $\delta$
will both be taken equal to $1$ from now on. However, as in the previous
section, it can easily be checked that essentially all the results stated below are
again uniform in $\delta \leq 1$ once space and time  and $\lambda$ are
properly rescaled according to $\delta$ (see Subsection~\ref{scalinglimits}). The result that is not uniform as stated is Proposition \ref{constant}; to have uniformity, $k\geq0$ should be replaced by $k\geq k_0>0$ for any $k_0>0$.

\begin{definition}
\label{def1}
We say that $s$ is a $K$-exceptional time if the path
$(S^s(t):\, 0\leq t < \infty)$ in $W^s$ starting from the origin
at time $t=0$ does not cross the moving boundary $t\rightarrow -1-K\sqrt{t}$.
$\mathcal{T}(K)$ is then defined as the set of all $K$-exceptional times
$s\in [0,\infty)$.
\end{definition}

Clearly, the set consisting of all the K-exceptional times in $[0,\infty)$
is a non-decreasing function of $K$.
Note that in Definition~\ref{def1}, the constant term for the moving boundary
is fixed at $1$. The next propostion asserts that for fixed $K$ the Hausdorff
dimension $dim_H$ of the set of exceptional times is unchanged if $1$ is replaced
by any $k \geq 1$. (The remark following the proof of the proposition
points out that more can be proved by essentially the same arguments.)
We note that as in dynamical percolation (see Sec.~6 of~\cite{HPS97}),
$dim_H(\mathcal{T}(K))$ is a.s. a constant by the ergodicity in $s$ of
the dynamical discrete web.

\begin{proposition}
\label{constant}
The Hausdorff dimension $dim_H$ of the set $T_k = T_k(K)$
of exceptional times $s \geq0$ such
that $S^s$ does not cross the moving boundary $-k-K\sqrt{n}$
does not depend on $k \geq 0$ (for fixed $K$).
\end{proposition}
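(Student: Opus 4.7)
Proof plan. The easy direction is monotonicity: since $-k-K\sqrt{n}\leq -K\sqrt{n}$, one has the inclusion $T_0(K)\subseteq T_k(K)$ and hence $dim_H(T_0)\leq dim_H(T_k)$. The work is in the reverse inequality $dim_H(T_k)\leq dim_H(T_0)$, which I plan to obtain via a coupling reminiscent of the proof of part~(3) of Theorem~\ref{violation}. Writing $d_k$ for the a.s.\ constant value of $dim_H(T_k)$ (constant by the ergodicity of the $s$-dynamics noted before the proposition), it suffices to show $d_j\leq d_0$ for every integer $j\geq 1$; the general case $k\geq 0$ then follows by taking $j=\lceil k\rceil$ and using $d_0\leq d_k\leq d_j$.

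Fix an integer $j\geq 1$ and consider the event
\[
E_j=\{\xi_{m,m}^0=+1\text{ for all }m<j\}\cap\{\text{no Poisson clock at }(m,m),\,m<j,\text{ rings on }[0,1]\},
\]
which has probability $2^{-j}e^{-j\lambda}>0$ and depends only on the $\xi$-variables at the diagonal sites $\{(m,m):m<j\}$. On $E_j$, for every $s\in[0,1]$ the walker from $(0,0)$ takes $j$ consecutive right steps, reaches $(j,j)$ at time $j$, and thereafter coincides with the walk $S^s_{(j,j)}$ starting at $(j,j)$. Define
\[
\bar T_j=\{s\in[0,1]:\,S^s_{(j,j)}(n)\geq -K\sqrt{n-j}\text{ for all }n\geq j\}.
\]
By the space-time translation invariance of the dynamical discrete web, $\bar T_j$ has the same distribution as $T_j(K)\cap[0,1]$; being determined by $\xi$-variables at sites $(i,n)$ with $n\geq j$, it is independent of $E_j$. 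Furthermore, on $E_j$ one has $\bar T_j\subseteq T_0(K)\cap[0,1]$: for $s\in\bar T_j$ and $n\geq j$, $S^s(n)=S^s_{(j,j)}(n)\geq -K\sqrt{n-j}\geq -K\sqrt{n}$, while for $0\leq n<j$, $S^s(n)=n\geq -K\sqrt{n}$.

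To complete the argument, I will combine these ingredients as follows. By stationarity of the random set $T_j$ under the shift $s\mapsto s+u$, the random variables $dim_H(T_j\cap[N-1,N])$ for $N\geq 1$ are identically distributed; since $dim_H(T_j)=\sup_N dim_H(T_j\cap[N-1,N])=d_j$ almost surely, the essential supremum of $dim_H(T_j\cap[0,1])$ equals $d_j$, so $\mathbb{P}(dim_H(T_j\cap[0,1])>d_j-\varepsilon)>0$ for every $\varepsilon>0$. Transferring this via $\bar T_j\stackrel{d}{=}T_j(K)\cap[0,1]$ and using the independence of $E_j$ and $\bar T_j$, the event $E_j\cap\{dim_H(\bar T_j)>d_j-\varepsilon\}$ has positive probability; on it, the inclusion $\bar T_j\subseteq T_0$ yields $d_j-\varepsilon<dim_H(\bar T_j)\leq dim_H(T_0)=d_0$. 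Since $d_j,d_0$ are deterministic constants, $d_j-\varepsilon\leq d_0$, and letting $\varepsilon\to 0$ gives $d_j\leq d_0$. The main technical point is the essential-supremum step bridging $dim_H(T_j\cap[0,1])$ and $d_j$, which rests only on stationarity of the $s$-dynamics and the countable stability of Hausdorff dimension.
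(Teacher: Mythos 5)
Your proposal is correct and follows essentially the same route as the paper: reduce to integer $k$ by monotonicity, force $j$ right steps along the diagonal by a positive-probability event independent of the translated exceptional set $\bar T_j$, and transfer Hausdorff dimension via the essential-supremum characterization of the a.s.\ constant $d_j$ together with $\bar T_j \subseteq T_0$ on that event. The only cosmetic difference is that you condition on a fixed positive-probability event (diagonal arrows frozen right on all of $[0,1]$) rather than intersecting $\bar T_k$ with the random set $\{s\in[0,1]:\forall m<k,\ \xi^s_{m,m}=+1\}$ as the paper does, but the essential-supremum comparison and conclusion are identical.
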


\begin{proof}
By monotonicity in $k$, it is enough to prove that
$dim_H(T_{k}(K))\leq dim_H(T_{0}(K))$ for $k$ any 
positive integer. By the same reasoning used to prove the last claim of
Theorem \ref{violation}, the Hausdorff dimension of $T_0(K)$ is~$\geq$ 
the Hausdorff dimension of the set of times $\{s\geq 0: \forall 
\  m<k, \ \xi_{m,m}^s=+1\} \bigcap \bar T_k(K)$ where $\bar T_k(K)$ is 
the translation (from $(0,0)$ to $(k,k)$) of $T_k(K)$. 
By ergodicity in $s$, the a.s. constant $dim_H(\bar T_{k}(K))$
is the essential supremum of the random variable 
$dim_H(\bar T_k(K)\bigcap[0,1])$. On the other hand, since 
$\bar T_k(K)\bigcap[0,1]$ and $\{s\in[0,1]: \forall \  m<k, \ \xi_{m,m}^s=+1\}$ 
are independent and the probability to have 
$\{\forall \ s\in[0,1], \ \forall \  m<k, \ \xi_{m,m}^s=+1\}$ is strictly
positive, it follows that 
$dim_H(\{s \in [0,1]: \forall \  m<k, \ \xi_{m,m}^s=+1\} \bigcap \bar T_k(K))$ 
has the same essential sup as $dim_H(\bar T_k(K) \bigcap [0,1])$. Hence
$dim_H(T_{k}(K)) = dim_H(\bar T_k(K)) \leq dim_H(T_{0}(K))$
and the conclusion follows.


\end{proof}

\begin{rem} 
Define ${\tilde {\mathcal{T}}}(K)$ to be the set
of $s$ such that for some $j_0\geq 0$ and some $i_0 \in \mathbb{Z}$,
the infimum over $n \geq 0$ of $\{S_{(i_0,j_0)}(n) + K \sqrt{n-j_0}\}$
(or equivalently of $\{S_{(i_0,j_0)}(n) + K \sqrt{n}\}$)
is $> \, -\infty$.
It is not hard to see by arguments like those of
Proposition~\ref{constant} that $dim_H({\tilde {\mathcal{T}}}(K))
= dim_H(\mathcal{T}(K))$.
\end{rem}

\subsection{Lower Bound}
\label{lowerbound}

\begin{proposition}
\label{lower}
$\textrm{dim}_{H}(\mathcal{T}(K))$ converges to $1$ as $K\to \infty$.
\end{proposition}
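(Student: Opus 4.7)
The plan is to establish this lower bound via a Frostman energy method, building directly on the second-moment computation in the proof of Lemma~\ref{propp}. The crucial observation is that the exponent $b=b(\gamma)$ produced by the estimate~(\ref{energy}) can be made arbitrarily small by taking $\gamma$ large, whereas the constant $K(\gamma)=\frac{\gamma-2}{2}\sqrt{(\gamma+1)/(\gamma-1)}$ from Theorem~\ref{violation} grows without bound; chaining these two facts together is what will drive $\dim_H(\mathcal{T}(K))\to 1$. Given large $K$, I would fix $\gamma$ with $K(\gamma)\leq K$ and, invoking monotonicity of $\mathcal{T}(K)$ in $K$ together with Proposition~\ref{constant} to absorb the unit offset in the boundary, reduce the problem to bounding from below the Hausdorff dimension of the compact set $\bigcap_n \bar E_n$, where $E_n=\{s\in[0,1]:\bigcap_{k\leq n}A_k^s\}$ and $\bar E_n$ is the closed modification of $E_n$ obtained from the switching-time device of Section~\ref{existence}; this set sits inside $\mathcal{T}(K)\cap[0,1]$.

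The natural random measures for the energy argument are
\begin{equation*}
\mu_n(ds)=\frac{\prod_{k=0}^n 1_{A_k^s}}{\prod_{k=0}^n \mathbb{P}(A_k)}\, ds
\end{equation*}
on $[0,1]$, concentrated on $E_n$. Stationarity in $s$ and independence of the boxes give $\mathbb{E}[\mu_n([0,1])]=1$, and the second-moment bound already established in Lemma~\ref{propp} yields $\sup_n \mathbb{E}[\mu_n([0,1])^2]<\infty$, so Paley--Zygmund provides $\mathbb{P}(\mu_n([0,1])\geq 1/2)\geq c>0$ uniformly in $n$. For $\alpha\in(0,1)$ the decisive estimate is
\begin{equation*}
\mathbb{E}\, I_\alpha(\mu_n)=\int_0^1\!\!\int_0^1|s-s'|^{-\alpha}\prod_{k=0}^n\frac{\mathbb{P}(A_k^s\cap A_k^{s'})}{\mathbb{P}(A_k)^2}\, ds\, ds' \leq \tilde K'\int_0^1\!\!\int_0^1|s-s'|^{-\alpha-b(\gamma)}\, ds\, ds',
\end{equation*}
a direct consequence of~(\ref{energy}); it is finite whenever $\alpha+b(\gamma)<1$.

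The uniform second-moment bound yields tightness, hence a subsequential weak limit $\mu_\infty$ supported on $\bigcap_n \bar E_n$; Fatou (lower semicontinuity of the $\alpha$-energy) gives $\mathbb{E}\, I_\alpha(\mu_\infty)<\infty$ and Paley--Zygmund combined with weak convergence gives $\mathbb{P}(\mu_\infty\neq 0)>0$. Frostman's lemma then delivers $\dim_H(\mathcal{T}(K))\geq\alpha$ on an event of positive probability, which is upgraded to an almost sure statement by the ergodicity of $s\mapsto W^s$ cited after Proposition~\ref{constant}. Thus $\dim_H(\mathcal{T}(K))\geq 1-b(\gamma)$ for every admissible $\gamma$, and it remains to observe that the diffusively scaled dimensions of the boxes $B_k$ force $\inf_k \mathbb{P}(A_k)\geq p_*>0$ (uniformly in $\gamma$ large) via the invariance principle plus a direct check at small $k$, so $b(\gamma)\leq \log(1/p_*)/\log\gamma\to 0$. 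Choosing $\gamma=\gamma(K)\to\infty$ as $K\to\infty$ completes the argument.

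The main technical hurdle I anticipate is precisely the uniformity of $\sup_k 1/\mathbb{P}(A_k)$ in $\gamma$, since the vanishing of $b(\gamma)$ depends on controlling this quantity independently of the box parameter. This is where the fact that $\mathbb{P}(A_k)$ converges (via the invariance principle) to a fixed positive Brownian probability---the probability that a Brownian motion started at $1/2$ stays positive on $[0,1]$ and ends at or above $1$---does essential work. The closedness issue for $E_n$ is a secondary point, handled exactly as in Section~\ref{existence} by passing to $\bar E_n$.
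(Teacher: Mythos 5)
Your proposal is correct and follows essentially the same route as the paper: the random measures $\mu_n$ coincide with the paper's $\sigma_n$, the energy bound comes straight from estimate~(\ref{energy}) of Lemma~\ref{propp}, and the observation that $b(\gamma)\to 0$ as $\gamma\to\infty$ (while $K(\gamma)\to\infty$) is exactly the paper's mechanism. The only difference is that you spell out the Paley--Zygmund/tightness/Frostman chain that the paper delegates to the citation of Section~6 of~\cite{SS05}, and you correctly flag the one genuine subtlety---the uniformity of $\sup_k 1/\mathbb{P}(A_k)$ over large $\gamma$---which the paper addresses implicitly via the convergence $\mathbb{P}(A_k)\to\mathbb{P}(A)$ and makes quantitative in the remark following the proposition.
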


\begin{proof}
Let $\alpha <1$ be fixed. Since $\mathcal{T}(K)$ increases with $K$ it
is enough to show that for $K$ large enough the Hausdorff dimension
is at least $\alpha$.

Consider the random measure $\sigma_n$, defined as
$\sigma_n(E)=\int_E \prod_{k=0}^n (1_{A_k^s}/ \mathbb{P}(A_k)) \ ds$,
for any Borel set $E$ in $[0,1]$ ( where $\{A_k^s\}$ are defined as in
Section~\ref{existence}). We define the $\alpha$-energy of $\sigma_n$
as
\begin{equation}
\mathcal{E}_{\alpha}(\sigma_n)=\int_{0}^1 \int_0^1 \frac{1}{|s-s'|^{\alpha}} \
d \sigma_n(s) \ d \sigma_n(s') \, .
\end{equation}
By identical arguments as in Section~6 of [SS05], if the expected value
of $\mathcal{E}_{\alpha}(\sigma_n)$ is bounded above as $n \to \infty$,
then the Hausdorff dimension of the set of exceptional $s$ (in $[0,\infty)$)
for which $\cap_{k=0}^\infty  A_k^s$ occurs is at least $\alpha$.
By Fubini's Theorem, 
\begin{equation}
\mathbb{E}(\mathcal{E}_{\alpha}(\sigma_n))=
\int_0^1 \int_0^1 |s-s'|^{-\alpha} \ \prod_{k=0}^n
\frac{\mathbb{P}(A_k^s\cap A_k^{s'})}{\mathbb{P}(A_k^s)^2}
\ \ ds \ ds'
\end{equation}
and 
by (\ref{energy}), we have that
\begin{equation}
\label{Haus}
\sup_n \mathbb{E}(\mathcal{E}_{\alpha}(\sigma_n))
\ \leq \ \bar K \left(\left[\int_0^1 \int_0^1
\frac{1}{|s-s'|^{b+\alpha}} \ ds \ ds' \right] \right) 
\end{equation}
with $b=(\log{\sup_k \frac{1}{\mathbb{P}(A_k)}})/\log{\gamma}$.
In particular, taking
$K=(\frac{\gamma -2}{2})\sqrt{\frac{\gamma+1}{\gamma-1}}$
as in the proof of Theorem~\ref{violation} with $\gamma$ large enough,
$b+\alpha$ can be made smaller than $1$,
and the right-hand side of (\ref{Haus}) is finite.
\end{proof}

\begin{rem}Combining the proofs of Lemma~\ref{propp}
and Proposition~\ref{lower} with the remark following
Proposition~\ref{constant}, we can obtain a more explicit lower bound
on $dim_H({\mathcal T} (K)) = dim_H( {\hat {\mathcal T}}(K))$ as follows.
Set ${\bar \gamma}_0 = 1/{\mathbb{P}(A)}$, where $A$ is the event
that a Brownian motion ${\hat B}(t)$ starting at $1/2$
at time $t=0$ has ${\hat B}(1)>1$ and $\inf_{t\in [0,1)}{\hat B}(t)>0$,
and define ${\bar \gamma}(K)$ as the solution in $(2,\infty)$
of $K(\gamma)=(\frac{\gamma -2}{2})\sqrt{\frac{\gamma+1}{\gamma-1}}$
for $K>0$. Letting $K_0=K({\bar \gamma}_0)$, we then have
\begin{equation}
\label{lowerdim}
dim_H({\mathcal T}(K)) \ \geq \ 1\, - \,
\frac{\log{\bar \gamma_0}}{\log{\bar \gamma(K)}}
\ \textrm{for} \ K\, > \, K_0 \, .
\end{equation}
\end{rem}

\subsection{Upper Bound}
\label{upperbound}
We will prove the following proposition.
\begin{proposition}
\label{upper}
For any $0<l<1$,
$\textrm{dim}_H(\mathcal{T}(K)) \leq 2(\frac{1}{2}-p(\frac{K}{l}))$
where $2p({K}) \in (0,1)$ is the real solution
$u = u(K) \in (0,1)$ of the equation
\begin{equation}
\label{sato}
f(u,{K}) \equiv
\frac{\sin(\pi u / 2) \Gamma(1+ u / 2)}{\pi} \sum_{n=1}^{\infty}
\frac{(\sqrt{2} K)^n}{n !} \ \Gamma((n-u)/2)=1 \, .
\end{equation}
Furthermore,
$\lim_{K\uparrow\infty}2(\frac{1}{2}-p(K))=1$ and
more significantly
$\lim_{K\downarrow0}2(\frac{1}{2}-p(K))=0$.
\end{proposition}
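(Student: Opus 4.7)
The plan is a first-moment covering estimate: cover $[0,1]$ by $\lfloor 1/\epsilon\rfloor$ intervals of length $\epsilon$ and bound $q(\epsilon):=\mathbb{P}(\mathcal{T}(K)\cap[0,\epsilon]\neq\emptyset)$. If one can show $q(\epsilon)\leq C\,\epsilon^{(2-\eta)p(K/l)}$ for every $\eta>0$, then the expected number $N_\epsilon$ of covering intervals meeting $\mathcal{T}(K)$ is at most $C\,\epsilon^{(2-\eta)p(K/l)-1}$; a dyadic Borel--Cantelli argument along $\epsilon_n=2^{-n}$ then gives $\dim_H\mathcal{T}(K)\leq 1-(2-\eta)p(K/l)$ almost surely, and letting $\eta\downarrow 0$ yields the desired $2(\tfrac12-p(K/l))$. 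The first ingredient is the random-walk persistence
$$\mathbb{P}\bigl(S^0(n)\geq -k-K\sqrt{n},\ n\leq T\bigr)\ \sim\ C(k,K)\,T^{-p(K)},\qquad T\to\infty,$$
with $p(K)$ as defined. I would deduce this from its Brownian analog by a KMT strong approximation: under the Lamperti change of variables $X_u:=e^{-u/2}B(e^u)$, the event $\{B(t)\geq -K\sqrt{t}\}$ becomes persistence of the Ornstein--Uhlenbeck process $X$ above $-K$, whose decay exponent is the ground-state Dirichlet eigenvalue of the OU generator on $(-K,\infty)$. Expressing the eigenfunction via parabolic-cylinder functions and imposing the boundary condition at $-K$ gives precisely the spectral equation $f(u,K)=1$.

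\textbf{Step 2 (coupling and reduction to an event for $S^0$).} Fix $l\in(0,1)$ and $\eta\in(0,1)$, and set $T=\lfloor\epsilon^{-(2-\eta)}\rfloor$. Using the sticky-pair representation of Section~\ref{pairsofpaths} for $(S^0,S^{s^\star})$ with $s^\star\in[0,\epsilon]$, after rescaling space by $\sqrt{T}$ and time by $T$ the pair converges to sticky Brownian motions with stickiness $\theta=(\lambda\sqrt{T}\,\epsilon)^{-1}=\lambda^{-1}\epsilon^{-\eta/2}\to\infty$ as $\epsilon\to 0$. In this strongly sticky regime the separate-time $C(\tau)\asymp\tau^2/\theta^2$ (from $\tau=C(\tau)+\theta\sqrt{C(\tau)}$), so the difference $|S^0(n)-S^{s^\star}(n)|=|S_1(C(n))-S_2(C(n))|\asymp\sqrt{C(n)}$ is of order $\lambda n\epsilon$ uniformly for $n\in[0,T]$. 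The binding case is $n=T$, where $\lambda T\epsilon=\lambda\epsilon^{-1+\eta}\leq(K/l-K)\epsilon^{-1+\eta/2}=(K/l-K)\sqrt{T}$ once $\epsilon$ is small (monotonicity of $\sqrt{\cdot}$ then handles every $n\leq T$). Hence this deviation is absorbable into the extra slack $(K/l-K)\sqrt{n}$, and on this high-probability event, if $s^\star$ is $K$-exceptional then $S^0$ itself satisfies $S^0(n)\geq -k'-(K/l)\sqrt{n}$ on $[0,T]$ for some $k'=k'(K,l)$. The complementary probability (atypical local-time behavior, or Doob-type fluctuations of $S_1-S_2$ around its typical value) is $o(T^{-p(K/l)})$ by Lemma~\ref{lemm1} combined with standard random-walk tail bounds.

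\textbf{Step 3 (assembly and asymptotics).} Combining Steps 1 and 2, $q(\epsilon)\leq C\,T^{-p(K/l)}+o(T^{-p(K/l)})\leq C\,\epsilon^{(2-\eta)p(K/l)}$, and the covering argument concludes as above. For the asymptotics of $p(K)=u(K)/2$ from $f(u(K),K)=1$: as $K\to\infty$ the series $\sum_{n\geq 1}(\sqrt{2}K)^n\Gamma((n-u)/2)/n!$ diverges for any fixed $u\in(0,1)$, so the prefactor $\sin(\pi u/2)$ must vanish to keep $f=1$, forcing $u(K)\to 0^+$ and hence $2(\tfrac12-p(K/l))\to 1$. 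As $K\to 0^+$, only $n=1$ survives at leading order, giving $f(u,K)\sim(\sqrt{2}K/\pi)\sin(\pi u/2)\Gamma(1+u/2)\Gamma((1-u)/2)=1$; since $\Gamma((1-u)/2)\uparrow\infty$ only as $u\uparrow 1$, this balance forces $u(K)\to 1^-$ and hence $2(\tfrac12-p(K/l))\to 0$.

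\textbf{Main obstacle.} Step 2 is the main hurdle. At the ``natural'' scale $T=\epsilon^{-2}$ the rescaled sticky-BM parameter $\theta=1/\lambda$ stays of order one rather than diverging, so $|S^0-S^{s^\star}|$ is $\Theta(\sqrt{T})$, comparable to the boundary itself, and cannot be absorbed into a slack with $l$ close to $1$. Taking $T=\epsilon^{-(2-\eta)}$ with $\eta>0$ puts the pair into the strongly sticky regime at a small cost in the persistence exponent; letting $\eta\downarrow 0$ recovers the bound for every $l<1$. The necessity of $l<1$ strictly reflects the unavoidable $\Theta(\lambda n\epsilon)$ growth of the coupling error with $n$, which ratios to $\Theta(\lambda\sqrt{n}\,\epsilon)$ against the boundary $K\sqrt{n}$ and can be folded into the slope $K\mapsto K/l$ only for $l$ away from $1$.
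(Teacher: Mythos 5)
There is a genuine gap in Step~2, and it stems from a misuse of the sticky‐pair representation. The coupling of $(S^s,S^{s'})$ through the three independent walks $(S_1,S_2,S_3)$ and the random time change $C$ from Section~\ref{pairsofpaths} is a statement about the joint law at two \emph{deterministic} dynamical times $s$ and $s'$. You apply it to $(S^0, S^{s^\star})$ where $s^\star$ is the (random) exceptional time in $[0,\epsilon]$; that joint law is not the sticky pair, it is a size‐biased version, and the local‐time estimate of Lemma~\ref{lemm1} does not transfer. Even putting this aside, your argument requires control of $\sup_{s'\in[0,\epsilon]}\sup_{n\le T}|S^0(n)-S^{s'}(n)|$, and a bound for a single fixed $s'$ does not give this supremum. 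The order‐of‐magnitude statement ``$|S^0(n)-S^{s^\star}(n)|\asymp\lambda n\epsilon$'' is a typical‐case heuristic, not a deterministic or high‐probability pathwise bound, and the claim that the complementary probability is $o(T^{-p(K/l)})$ is asserted rather than established (Lemma~\ref{lemm1} gives only a polynomial tail in $\Delta$ whose exponent must be matched to $p(K/l)$ -- this is not done). As a result the reduction in Step~2 from ``$s^\star$ exceptional'' to ``$S^0$ persists above $-k'-(K/l)\sqrt n$'' is not justified.

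The paper avoids all of this by a monotone rather than a distributional coupling. Form the static configuration $\bar\xi^{(1)}_{i,j}=\max_{s\in[0,\epsilon]}\xi^s_{i,j}$ and let $S_\epsilon$ be the walk following these arrows. By a simple induction (at equal sites the max arrow points at least as far right; at unequal sites the integer parity forces a gap of at least $2$), $S_\epsilon(n)\ge S^s(n)$ for every $n$ and \emph{every} $s\in[0,\epsilon]$ simultaneously. So the event that some $s\in[0,\epsilon]$ is $K$-exceptional forces $S_\epsilon$ itself to stay above $-1-K\sqrt n$ -- no distributional comparison or supremum over $s^\star$ is needed. Since $S_\epsilon$ is a walk with drift $\sim\epsilon$, Proposition~\ref{principal} (random-walk-to-BM comparison, which is where the loss $K\mapsto K/l$ comes from) and Lemma~\ref{cont} (Girsanov for drifted BM, using Sato's Lemma~\ref{samere}) give $\mathbb{P}(U_\epsilon)=O(\epsilon^{1-2(1/2-p(K/l))})$ directly, without your auxiliary truncation at $T=\epsilon^{-(2-\eta)}$ or the limit $\eta\downarrow 0$. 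So while your Step~1 persistence exponent, your Step~3 covering/Borel--Cantelli argument, and your asymptotic analysis of the spectral equation $f(u,K)=1$ are all sound, the bridge in Step~2 would need to be replaced by a uniform, monotone construction of the above kind to make the proof go through.
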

As a consequence of Propositions \ref{constant}, \ref{lower} and \ref{upper},
we immediately have the following.
\begin{theorem}
\label{dimension}
The limits as $K\to 0$ and $K \to \infty$ of $dim_H(\mathcal{T}(K))$ are
\begin{equation}
\lim_{K\uparrow\infty} \ \textrm{dim}_H(\mathcal{T}(K))=1,
\ \  \lim_{K\downarrow0} \ \textrm{dim}_H(\mathcal{T}(K))=0 \, .
\end{equation}
For any continuous function $g$ starting at $(-k,0)$, $k>0$ such that
$\lim_{t\uparrow\infty}\frac{g(t)}{\sqrt{t}}=0$, the set of exceptional times
for which the path starting from the origin at time $0$
does not cross $g$ has Hausdorff dimension zero.
\end{theorem}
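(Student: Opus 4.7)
The plan is to deduce both assertions directly from Propositions~\ref{constant}, \ref{lower}, and \ref{upper}; the second assertion will reduce to the first by a simple comparison.

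For the limits, $\lim_{K\uparrow\infty} \textrm{dim}_H(\mathcal{T}(K))=1$ is immediate from Proposition~\ref{lower} together with the trivial upper bound $\textrm{dim}_H(\mathcal{T}(K))\leq 1$. For $\lim_{K\downarrow 0} \textrm{dim}_H(\mathcal{T}(K))=0$, I would fix any $l\in(0,1)$ and apply Proposition~\ref{upper} to obtain
\[
\textrm{dim}_H(\mathcal{T}(K)) \ \leq \ 2\left(\tfrac{1}{2}-p(K/l)\right).
\]
Since the same proposition asserts $\lim_{K\downarrow 0} 2\left(\tfrac{1}{2}-p(K)\right)=0$ and $K/l\downarrow 0$ as $K\downarrow 0$, the right-hand side tends to zero, giving the claim.

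For the second assertion, let $g$ be continuous with $g(0)=-k$, $k>0$, and $g(t)/\sqrt{t}\to 0$ as $t\to\infty$. Fix an arbitrary $K>0$. By the limiting assumption there exists $T<\infty$ with $-g(t)\leq K\sqrt{t}$ for all $t\geq T$, and by continuity $g$ is bounded below on $[0,T]$, so there exists $k'=k'(K,g)<\infty$ with $g(t)\geq -k'-K\sqrt{t}$ for all $t\geq 0$. Any exceptional time $s$ for $g$ (one for which $S^s(t)\geq g(t)$ for every $t\geq 0$) therefore also satisfies $S^s(t)\geq -k'-K\sqrt{t}$, so $s$ belongs to the set $T_{k'}(K)$ in the notation of Proposition~\ref{constant}. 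That proposition gives $\textrm{dim}_H(T_{k'}(K))=\textrm{dim}_H(\mathcal{T}(K))$, so the set of exceptional times for $g$ has Hausdorff dimension at most $\textrm{dim}_H(\mathcal{T}(K))$ for every $K>0$. Letting $K\downarrow 0$ and applying the first part concludes.

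No step in this plan is itself substantial, and there is no serious obstacle --- both assertions are structural corollaries of the propositions already established. The only point requiring a little care is matching the one-sided ``does not cross'' convention (staying above $g$) to the one-sided bound $S^s(t)\geq -k-K\sqrt{t}$ defining $\mathcal{T}(K)$, and then invoking Proposition~\ref{constant} to absorb the $(K$-dependent) additive constant $k'$ produced by the comparison without changing the dimension.
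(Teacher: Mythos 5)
Your proposal is correct and matches the paper's intent: the paper states the theorem follows "immediately" from Propositions~\ref{constant}, \ref{lower}, and \ref{upper} and gives no further argument, and your writeup supplies exactly the routine details one would expect --- the $K\to\infty$ limit from Proposition~\ref{lower} with the trivial bound $\dim_H\le1$, the $K\to0$ limit from Proposition~\ref{upper}, and the comparison of $g$ with $-k'(K)-K\sqrt t$ combined with Proposition~\ref{constant} to absorb the $K$-dependent constant.
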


To prove Proposition~\ref{upper}
we need the two following lemmas.

\begin{lemma}(Sato [S77])
\label{samere}
Let $\tau=\inf\{t>0 :B(t)=-k+ K\sqrt{t}\}$, where $k,K$ are both positive,
and $B$ is a standard Brownian motion. Then there exists
$q \in (0,\infty)$ such that
\begin{equation}
\lim_{t \to \infty} t^{p(K)} \mathbb{P}(\tau>t) = \ q \, ,
\end{equation}
where $2 p({K})$ is the real solution in $(0,1)$ of~(\ref{sato}).
\end{lemma}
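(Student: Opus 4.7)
The plan is to reduce the parabolic-boundary first-passage problem for $B$ to a spectral problem for the Ornstein--Uhlenbeck process via a self-similar time change, and then to extract the polynomial tail exponent from the ground-state eigenvalue of the resulting Sturm--Liouville operator.

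First, I would introduce the Lamperti transform $X(u) = e^{-u/2}\,B(e^u)$ for $u\geq 0$. A direct application of It\^o's formula shows that $X$ solves $dX(u)=-\tfrac{1}{2}X(u)\,du+dW(u)$ for a standard Brownian motion $W$, so $X$ is a stationary Ornstein--Uhlenbeck process. Under the substitution $t=e^u$, the event $\{\tau>t\}$ becomes (up to the contribution of a bounded initial interval in $u$) the event $\{X(v)>K-k\,e^{-v/2}\text{ for all }v\leq u\}$. Since $k\,e^{-v/2}\to 0$ exponentially fast, this is compared to the constant-boundary survival event $\{X(v)>K\text{ for all }v\leq u\}$; the effect of the transient boundary is absorbed into a multiplicative correction factor that converges to a strictly positive constant as $u\to\infty$, by a monotone coupling of OU trajectories started from perturbed initial conditions together with a standard Girsanov-type estimate.

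Second, for the constant-boundary problem, classical spectral theory gives $\mathbb{P}^x(X(v)>K\text{ for all }v\leq u)=\sum_{j\geq 0}c_j(x)\,e^{-\mu_j u}$, where $0<\mu_0<\mu_1<\cdots$ are the eigenvalues of the OU generator $\mathcal{L}=\tfrac{1}{2}\partial_x^2-\tfrac{1}{2}x\,\partial_x$ on $(K,\infty)$ with Dirichlet boundary condition at $K$ and $L^2$-decay condition at $+\infty$. The eigenfunction equation reduces, after the change of variable $x=\sqrt{2}\,z$, to Hermite's equation with parameter $\nu=2\mu$, so the admissible eigenfunctions are Hermite functions $\phi_\mu(x)=H_{2\mu}(x/\sqrt{2})$ of non-integer order. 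The Dirichlet condition $H_{2\mu}(K/\sqrt{2})=0$ is the transcendental eigenvalue equation; expanding $H_{2\mu}$ in its power series about the origin and invoking the reflection identity $\Gamma(z)\Gamma(1-z)=\pi/\sin(\pi z)$ rewrites this vanishing condition as exactly $f(2\mu,K)=1$ in the notation of~(\ref{sato}). The smallest positive root is $\mu_0=p(K)$, and a standard Sturm--Liouville comparison argument confirms that no eigenvalue lies below it.

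Finally, reverting to the original time via $u=\log t$ gives $\mathbb{P}(\tau>t)\sim q\,t^{-p(K)}$, where the positive constant $q$ is the product of the leading spectral coefficient $c_0(X(0))$ and the transient-boundary correction factor from the first step; the strict positivity of both (hence $q\in(0,\infty)$) follows from the strict positivity of the principal eigenfunction $\phi_{\mu_0}$ on the interior of $(K,\infty)$ and the nondegeneracy of the initial condition. The main obstacle I anticipate is the explicit identification in the spectral step: while polynomial decay of the tail and the existence of a discrete spectrum are routine once the Lamperti transform is in place, matching the smallest eigenvalue to the specific transcendental equation~(\ref{sato}) demands a careful manipulation of Hermite/parabolic-cylinder function identities, essentially reproducing Sato's original computation, which is why the argument is cited rather than reproduced here.
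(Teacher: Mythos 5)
The paper itself supplies no proof of this lemma; it is stated as a quoted result and the reader is referred directly to Sato's 1977 article. Your write-up is therefore a reconstruction of Sato's argument rather than a competitor to anything the paper does, and you correctly flag this at the end.

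As a reconstruction, the high-level plan is sound and is indeed the standard route: the Lamperti/Doob transform $X(u)=e^{-u/2}B(e^u)$ converts $B$ to a stationary OU process, the square-root boundary becomes the almost-constant boundary $K-ke^{-u/2}$, and the tail of the survival time for OU above a level is governed by the principal Dirichlet eigenvalue of $\tfrac12\partial_x^2-\tfrac12 x\partial_x$ on $(K,\infty)$, with eigenfunctions given by parabolic cylinder (equivalently Hermite) functions of non-integer order. Two points deserve a warning. First, your identification of the eigenvalue equation is slightly off as stated: the condition you want is the vanishing of the \emph{full} series $\sum_{n\geq0}\frac{(\sqrt2 K)^n}{n!}\Gamma((n-u)/2)$, and it is precisely the $n=0$ term that, after multiplying by $\frac{\sin(\pi u/2)\Gamma(1+u/2)}{\pi}$ and applying the reflection formula, contributes the constant $-1$; moving it to the other side is what produces the ``$=1$'' in~(\ref{sato}) instead of ``$=0$''. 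This is a bookkeeping point, but if you reproduce the derivation you should make it explicit, since as written the claim ``$H_{2\mu}(K/\sqrt2)=0$ rewrites as $f(2\mu,K)=1$'' looks like a sign or normalization has been silently dropped. Second, the reduction from the transient boundary $K-ke^{-v/2}$ to the constant boundary $K$ is waved at (``monotone coupling \ldots together with a standard Girsanov-type estimate'') but is the genuinely delicate step: one inclusion is free (the constant-boundary event is contained in the transient-boundary event), but the matching upper bound requires a quantitative argument, e.g.\ conditioning at a fixed $u_0$ and using the spectral representation with the principal eigenfunction evaluated at $X(u_0)$, together with control of the survival probability on $[0,u_0]$ where the boundary has not yet settled. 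Since the lemma asserts a sharp asymptotic with a positive finite limit $q$, not just a power-law upper or lower bound, this step cannot be treated as a throwaway remark in a complete proof.
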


\begin{lemma}
\label{cont}
For any $K,k>0$, let
$\tau_\epsilon=\inf\{t>0 :B_{\epsilon}(t)=-k-K \sqrt{t}\}$
where
$B_{\epsilon}(t)= B(t)+ 2 \epsilon t$ and $B$ is a standard
Brownian motion. Then for some $C=C(k,K)<\infty$ and $\epsilon \leq 1$,
\begin{equation}
\label{exitbound}
\mathbb{P}(\tau_\epsilon=\infty) 
\leq C \, \epsilon^{1-2(\frac{1}{2}-p({K}))} \, .
\end{equation}
\end{lemma}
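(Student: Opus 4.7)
The plan is to derive an exact identity for $\mathbb{P}(\tau_\epsilon=\infty)$ via the exponential martingale associated with $B_\epsilon$, and then estimate the resulting expectation using the Sato-type tail of Lemma~\ref{samere}. Let $\tau_0 := \inf\{t>0 : B(t) = -k-K\sqrt{t}\}$ be the undrifted analogue of $\tau_\epsilon$ (a.s.\ finite by the law of the iterated logarithm applied to $-B$), and consider the martingale $M_t = \exp(2\epsilon B(t) - 2\epsilon^2 t)$. Optional stopping at the bounded time $\tau_0 \wedge T$ gives $\mathbb{E}[M_{\tau_0 \wedge T}] = 1$. As $T\to\infty$, the Cameron-Martin-Girsanov identity shows $\mathbb{E}[M_T\,1_{\tau_0 > T}] = \mathbb{P}(\tau_\epsilon > T)$, which decreases to $\mathbb{P}(\tau_\epsilon=\infty)$; meanwhile $M_{\tau_0} \leq e^{-2\epsilon k} \leq 1$, so dominated convergence yields $\mathbb{E}[M_{\tau_0}\,1_{\tau_0 \leq T}] \to \mathbb{E}[M_{\tau_0}]$. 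Since $B(\tau_0) = -k - K\sqrt{\tau_0}$, passing to the limit produces
\[
\mathbb{P}(\tau_\epsilon = \infty) = 1 - e^{-2\epsilon k}\,\mathbb{E}\bigl[\exp(-2\epsilon K\sqrt{\tau_0} - 2\epsilon^2 \tau_0)\bigr].
\]

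Next, set $g(t) = 2\epsilon K\sqrt{t} + 2\epsilon^2 t$, so $1 - e^{-g(\tau_0)} = \int_0^{\tau_0} g'(t)\,e^{-g(t)}\,dt$. Taking expectations and using Fubini, together with $1 - e^{-2\epsilon k} \leq 2\epsilon k$, yields
\[
\mathbb{P}(\tau_\epsilon = \infty) \;\leq\; 2\epsilon k \,+\, \int_0^\infty g'(t)\,e^{-g(t)}\,\mathbb{P}(\tau_0 > t)\,dt.
\]
Split the integral at $t=1$: the $[0,1]$ piece is at most $1 - e^{-g(1)} = O(\epsilon)$, while on $[1,\infty)$ I would apply the Sato-type estimate $\mathbb{P}(\tau_0 > t) \leq C\,t^{-p(K)}$. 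Substituting $u = 2\epsilon\sqrt{t}$ (so that $g(t) = Ku + u^2/2$, $g'(t)\,dt = (K+u)\,du$, and $t^{-p(K)} = (2\epsilon)^{2p(K)}\,u^{-2p(K)}$) transforms the integrand into $(2\epsilon)^{2p(K)}\bigl(K u^{-2p(K)} + u^{1-2p(K)}\bigr) e^{-Ku - u^2/2}$. Extending the $u$-integration to $(0,\infty)$, the tail integral is bounded by $C'\epsilon^{2p(K)}$, where the constant $C'\int_0^\infty (K u^{-2p(K)} + u^{1-2p(K)}) e^{-Ku - u^2/2}\,du$ is finite because $p(K) < 1/2$ keeps $u^{-2p(K)}$ integrable at the origin and the Gaussian factor controls the integrand at infinity. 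Since $2p(K) < 1$, the $O(\epsilon)$ contributions from $2\epsilon k$ and the short-time piece are both absorbed in $C\epsilon^{2p(K)}$ for $\epsilon \leq 1$, delivering the required bound $\mathbb{P}(\tau_\epsilon=\infty) \leq C\,\epsilon^{\,1-2(1/2-p(K))}$.

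The main subtlety is justifying the tail bound $\mathbb{P}(\tau_0 > t) \leq C t^{-p(K)}$ for our boundary $-k - K\sqrt t$, since Lemma~\ref{samere} is stated for the boundary $-k + K\sqrt{t}$. Under the time change $u = \log t$, $X(u) = e^{-u/2}B(e^u)$ is a stationary Ornstein-Uhlenbeck process, and both hitting problems reduce to the survival of $X$ above (respectively below) a horizontal barrier at $\mp K$ up to time $u$. The symmetry of the OU generator about the origin forces the principal killed eigenvalues to coincide, and the common value is precisely $p(K)$ as defined by~(\ref{sato}); this validates the use of the same exponent in the tail estimate and closes the argument.
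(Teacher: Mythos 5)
Your main argument follows the paper's own proof closely: your identity $\mathbb{P}(\tau_\epsilon=\infty)=1-e^{-2\epsilon k}\,\mathbb{E}\bigl[\exp(-2\epsilon K\sqrt{\tau_0}-2\epsilon^2\tau_0)\bigr]$ is exactly the paper's $\int_0^\infty\bigl(1-e^{(-2k-2K\sqrt{t})\epsilon-2\epsilon^2 t}\bigr)f(t)\,dt$; optional stopping of the Wald exponential martingale and the paper's Girsanov density argument are the same computation, and your Fubini step is the paper's integration by parts. Your single substitution $u=2\epsilon\sqrt{t}$ is a modestly cleaner way to extract $\epsilon^{2p(K)}$ than the paper's two separate scalings of the $\epsilon K t^{-1/2}e^{-2\epsilon K\sqrt{t}}$ and $2\epsilon^2 e^{-2\epsilon K\sqrt{t}}$ contributions, but the route is the same.

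The final paragraph, however, is wrong. You claim that the $x\mapsto -x$ symmetry of the Ornstein--Uhlenbeck generator forces the principal killed eigenvalues at $-K$ and at $+K$ to coincide, because the two square-root problems reduce to ``survival above $-K$'' and ``survival below $+K$'' respectively. The second identification is incorrect: for the boundary $-k+K\sqrt{t}$, Brownian motion starts at $0>-k$, so $\{\tau>t\}$ is the event of staying \emph{above} the rising curve, which after the OU time change is survival \emph{above} $+K$, not below it. The reflection $x\mapsto -x$ does identify ``above $-K$'' with ``below $+K$'', and those two share an eigenvalue; but ``above $-K$'' and ``above $+K$'' do not. The former is a bulk event whose decay rate vanishes as $K\to\infty$, the latter a large-deviation event whose rate grows, and Proposition~\ref{upper}'s assertion that $p(K)\to 0$ as $K\to\infty$ can only correspond to the former. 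So the eigenvalues are genuinely different for $K>0$, and your OU argument does not close the gap. The actual resolution is that the boundary displayed in Lemma~\ref{samere} carries a sign typo and should be read as $-k-K\sqrt{t}$ (equivalently $k+K\sqrt{t}$ after $B\mapsto-B$), consistently with its use everywhere in the proof of Lemma~\ref{cont} and in Proposition~\ref{upper}. With the lemma read that way, the rest of your proof is correct as written.
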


\begin{proof}
Let $f_{\epsilon}$ be the density of $\tau_{\epsilon}$.
By the Girsanov Theorem,
\begin{equation}
f_{\epsilon}(t)=\ \exp(-2(k+K\sqrt{t})\epsilon-2\epsilon^2 t)\ f(t)
\end{equation}
where $f=f_{0}$ is the density corresponding to a
standard Brownian motion. Therefore, since
$\mathbb{P}(\tau_0 <\infty)=1$, we have
\begin{equation}
\mathbb{P}(\tau_\epsilon=\infty)=
\int_0^\infty (1-e^{(-2k-2K\sqrt{t})
\epsilon-2\epsilon^2 t} ) \ f(t) \ dt  \, .
\end{equation}
Integrating by parts, we get that
\begin{eqnarray}
\mathbb{P}(\tau_{\epsilon}=\infty)=(1-e^{-2k\epsilon})+
\int_0^{\infty} \left(\frac{\epsilon K}{\sqrt{t}}+2\epsilon^2 \right)
e^{-2k\epsilon-2K\epsilon\sqrt{t}-2\epsilon^2 t}
\mathbb{P}(\tau\geq t) dt  \nonumber \\
\leq  2 \epsilon k + \epsilon K \int_0^{\infty}
\frac{e^{-2\epsilon K \sqrt {t}}}{\sqrt{t}} \mathbb{P}(\tau\geq t)
+2 \epsilon^2 \int_0^{\infty} e^{-2K\epsilon\sqrt{t}}
\mathbb{P}(\tau\geq t) \ dt \, .
\end{eqnarray}
On the one hand, by Lemma~\ref{samere},
\begin{eqnarray}
\epsilon K \int_0^{\infty}  \frac{1}{\sqrt{t}}
e^{-2 \epsilon K \sqrt {t}} \mathbb{P}(\tau\geq t) \ dt
& \leq & C_1 (K) \, \epsilon\int_0^\infty
\frac{e^{-2\epsilon K \sqrt {t}}}{t^{p+\frac{1}{2}}} \ dt \nonumber \\
& =  & C_2 (K) \, \epsilon^{1-2(\frac{1}{2}-p)} \int_0^\infty
\frac{e^{- \sqrt{u}}}{u^{p+1/2}} \ du  \nonumber \\
& =  & C_3 (K) \, \epsilon^{1-2(\frac{1}{2}-p)} \, .
\end{eqnarray}
On the other hand,
\begin{eqnarray}
\epsilon^2 \int_0^{\infty} e^{-2K\sqrt{t}\epsilon}
\mathbb{P}(\tau \geq t) \ dt &
\leq & C_4 (K) \, \epsilon^2 \int_0^\infty  e^{-2K \epsilon\sqrt{t} }
\frac{1}{t^p}\ dt \nonumber \\
& \leq & C_5 (K) \, \epsilon^2 \, \epsilon^{2(p-1)}
\int_0^{\infty} \frac{e^{-v}}{v^{2p-1}} dv \nonumber \\
& = & C_6 (K)  \, \epsilon^{1-2(\frac{1}{2}-p)} \, .
\end{eqnarray}
The last three displayed equations together easily imply~(\ref{exitbound}).
\end{proof}

\noindent{ \it Proof of Proposition~\ref{upper}.}
We are now ready to obtain an upper bound for the Hausdorff dimension of the
set of K-exceptional times $\mathcal{T}(K)$.
Let us partition $[0,1]$ into intervals of equal length
$\epsilon$, and select the intervals containing a K-exceptional time. The
union of those is a cover of $\mathcal{T}(K)$ and we now estimate
the number $n(\epsilon)$ of intervals in the cover.

Let $U_{\epsilon}$ be the event that there is a time $s$ in $[0,\epsilon]$
such that  $s\in\mathcal{T}(K)$. From the full dynamical arrow configuration
for all $ s \in [0,\epsilon]$, we construct a static arrow configuration
as follows. We declare 
the static arrow at $(i,j)$ to be 
right oriented if and only the dynamical arrow is right oriented
(i.e., $\xi_{i,j}^s = +1$) at some 
$s\in[0,\epsilon]$ (a similar construction was used in Section~\ref{tameness}).
In this configuration, the path $S_{\epsilon}$ starting
from the origin and following the arrows is a slightly right-drifting
random walk with 
$\mathbb{P}(S_{\epsilon}(n+1)-S_{\epsilon}(n)=+1)=
\frac{1}{2}+\frac{1}{2}(1-e^{-\epsilon})$. Clearly,
\begin{equation}
\mathbb{P}(U_{\epsilon})\leq
\mathbb{P}(\forall n, \, S_{\epsilon}(n)\geq -1-K \sqrt{n}) \, .
\end{equation}
Proposition~\ref{principal} of Appendix~\ref{appendix}
implies that for any $l<1$
\begin{equation}
\mathbb{P}(\forall n, \, S_{\epsilon}(n)\geq -1-K \sqrt{n})
\leq C_7 (K,l) \, \mathbb{P}(\forall t>0, \,
B_{\epsilon}(t)\geq -3-\frac{K}{l} \sqrt{t})+ O (\epsilon)
\end{equation}
and by Lemma~\ref{cont} it follows that
\begin{equation}
\mathbb{P}(U_{\epsilon}) = O(\epsilon^{1-2(\frac{1}{2}-p(\frac{K}{l}))}) \, .
\end{equation}
Hence
\begin{equation}
\mathbb{E}(n(\epsilon)) = O ( \epsilon^{-2(\frac{1}{2}-p(\frac{K}{l}))}) 
\end{equation}
so that
\begin{equation}
\limsup_{\epsilon \to 0} \mathbb{E}(\frac{n(\epsilon)}
{\epsilon^{-(1-2p(K/l))}}) \ < \ \infty \, .
\end{equation}
By Fatou's Lemma,
$\liminf_{\epsilon \to 0} n(\epsilon) \,
\epsilon^{1-2p(K/l)}$
is almost surely bounded, which implies that the Hausdorff dimension of
$\mathcal{T}(K)$ is bounded above by $2(\frac{1}{2}-p(\frac{K}{l}))$
and completes the proof of Proposition~\ref{upper}.

\appendix

\section{Some Estimates For Random Walks}
\label{appendix}
We will prove the following proposition.
\begin{proposition}
\label{principal}
Let $B_{\epsilon}(t)= B(t) + 2 \epsilon \, t $, where $B$ is
a standard Brownian motion, and let
$S_{\epsilon}$ be a discrete time simple random walk with drift given by
\begin{equation} \mathbb{P}(S_{\epsilon}(n+1)-S_{\epsilon}(n)=1)=
\frac{1}{2}+ \frac{1}{2} (1-e^{-\epsilon}) \, .
\end{equation}
For $K>0$ there exists $C>0$ such that for any $0<l<1$,
\begin{equation}
\label{theineq}
\mathbb{P}(\forall n\in\mathbb{N}, \, S_{\epsilon}(n) \geq-1-l K \sqrt{n})
\leq C \ \mathbb{P}(\forall t\in\mathbb{R}^+, \,
B_{\epsilon}(t) \geq-3- K \sqrt{t} ) + O(\epsilon) \, .
\end{equation}
\end{proposition}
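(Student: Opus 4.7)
The plan is to couple $S_\epsilon$ with $B_\epsilon$ on a single probability space via a strong invariance principle, and to argue that the slack built into~\eqref{theineq} (slope $lK<K$ and offset $1<3$) absorbs three sources of discrepancy: the coupling error at integer times, the mismatch between the drift/variance of $S_\epsilon$ and those of $B_\epsilon$, and the fluctuation of $B_\epsilon$ between integer times.

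First I would write $S_\epsilon(n)=\sum_{i=1}^n X_i$ with i.i.d.\ $X_i\in\{\pm 1\}$ of mean $\mu=1-e^{-\epsilon}=\epsilon+O(\epsilon^2)$ and variance $\sigma^2=1-\mu^2=1-O(\epsilon^2)$. Since the $X_i$ are bounded, the Koml\'os--Major--Tusn\'ady strong approximation produces on a common probability space a standard Brownian motion $W$ for which
$$\sup_{k\leq n}\bigl|S_\epsilon(k)-\mu k-\sigma W(k)\bigr|\leq D_0\log(n+2)\quad\text{for all }n\geq 0,$$
off an event whose probability can be made $O(\epsilon)$. Setting $B_\epsilon(t):=W(t)+2\epsilon t$ produces the required drifted Brownian motion.

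Assuming the KMT-good event and $E_S:=\{S_\epsilon(n)\geq -1-lK\sqrt{n}\ \forall\, n\}$, solving for $W(n)$ and using $l/\sigma<1$, $\mu/\sigma\leq 2\epsilon$, $\sigma^{-1}\leq 2$ (all valid for small $\epsilon$) yields
$$B_\epsilon(n)\geq -2-K\sqrt{n}+(1-l/\sigma)K\sqrt{n}-\sigma^{-1}D_0\log(n+2).$$
The positive slack $(1-l/\sigma)K\sqrt{n}$ eventually dominates the logarithmic coupling error. Extending to continuous $t\in[n,n+1]$ via $B_\epsilon(t)\geq B_\epsilon(n)-M_n$ with $M_n:=\sup_{s\in[n,n+1]}|W(s)-W(n)|$ having Gaussian tails, and using $\sqrt{n}\leq\sqrt{t}$, one concludes $B_\epsilon(t)\geq -3-K\sqrt{t}$ provided $M_n$ stays below the available slack; the Gaussian tails are summable against the $\sqrt{n}$ margin for $n\geq N_0=N_0(K,l)$.

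The main obstacle is the small-$n$ regime: for $n\leq N_0$ the slack has not yet accumulated and $M_n$ is of order $1$ with constant probability. This is precisely where the multiplicative constant $C=C(K,l)$ on the right-hand side of~\eqref{theineq} is essential. I would handle this by fixing $N_0=N_0(K,l)$ independent of $\epsilon$, restarting the coupling at $N_0$, and using the Markov property: the contribution to $\mathbb{P}(E_S)$ from $[0,N_0]$ is bounded by a constant factor depending on $K,l$, while the contribution from $[N_0,\infty)$ is controlled by the analysis above and is at most $\mathbb{P}(E_B)$ up to an additive $O(\epsilon)$. The constant $C(K,l)$ collects the ratio of good-behavior probabilities on $[0,N_0]$ for $S_\epsilon$ versus $B_\epsilon$, and the $O(\epsilon)$ term collects the KMT-failure probability together with the summed Gaussian-tail remainder for $n\geq N_0$.
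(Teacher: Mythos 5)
Your plan differs from the paper's: the paper couples $S_\epsilon$ to $B_\epsilon$ by a \emph{Skorokhod embedding} ($T_{n+1}=\inf\{t>T_n:\,|B_\epsilon(t)-B_\epsilon(T_n)|\geq 1\}$, $S'_\epsilon(n)=B_\epsilon(T_n)$), which makes the coupling error pathwise bounded by $1$ with no exceptional event, then controls the embedding times $\bar T_n$ by the Law of Large Numbers and decouples the two resulting constraints by an FKG inequality (Lemma~\ref{third}). You instead propose KMT. The difference is not cosmetic, and there is a genuine gap in your version.

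The gap is the assertion that the KMT-failure event ``can be made $O(\epsilon)$'' while keeping the coupling constant $D_0$ (and hence your threshold $N_0$) independent of $\epsilon$. The KMT bound for bounded i.i.d.\ increments is
\begin{equation*}
\mathbb{P}\Bigl(\max_{k\leq n}\bigl|S_\epsilon(k)-\mu k-\sigma W(k)\bigr| > c_1\log n + x\Bigr)\leq c_2 e^{-\lambda x},
\end{equation*}
with $c_1,c_2,\lambda$ uniform over small $\epsilon$. Making the infinite-horizon failure probability $O(\epsilon)$ forces $x\sim\log(1/\epsilon)$, so the coupling threshold $D_0$ must grow like $\log(1/\epsilon)$, and then the first index $N_0$ at which the slack $(1-l/\sigma)K\sqrt{n}$ beats $D_0\log(n+2)$ grows like $\log^2(1/\epsilon)$. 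Your ``restart at $N_0$'' step then produces a multiplicative factor of order $\mathbb{P}(\inf_{[0,N_0]}B_\epsilon>-3)^{-1}\sim\sqrt{N_0}$, which diverges as $\epsilon\to 0$, contradicting the claim that $C$ in~\eqref{theineq} depends only on $K$. Conversely, keeping $D_0$ and $N_0$ fixed leaves the KMT-failure probability a fixed positive constant, which is useless here because both sides of~\eqref{theineq} tend to $0$ (indeed like $\epsilon^{2p(K)}$ by Lemma~\ref{cont}), so an additive error that does not vanish with $\epsilon$ destroys the estimate in the only regime where the proposition is used. The paper's Skorokhod embedding avoids this dichotomy entirely: the error is deterministically $\leq 1$, and the remaining work (LLN control of $\bar T_n$ plus FKG) introduces no $\epsilon$-dependent exceptional event beyond the genuine $O(\epsilon)$ large-deviation terms.

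\end{document}
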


We consider $S'_{\epsilon}$ the discrete time random walk embedded in
the drifting Brownian motion $B_{\epsilon}$. Namely,
we define inductively a sequence of stopping times
$T_i^\epsilon$ and their increments $\{\tau_i^\epsilon =
T_i^\epsilon - T_{i-1}^\epsilon\}$, with $T_{0}^\epsilon=0$ and
\begin{equation}
\label{tautau}
T_{n+1}^\epsilon=\inf\{t>T_n^\epsilon :  \
|B_{\epsilon}(t)- B_{\epsilon}(T_n^\epsilon)| \geq 1\}
\end{equation}
and then we define
$S'_\epsilon(n)=B_{\epsilon}(T_n^\epsilon)$. The proof of
Proposition~\ref{principal} will be done by coupling $S'_{\epsilon}$
and $B_{\epsilon}$ in this particular way. Note that $S'_{\epsilon}$ has
a different drift than $S_\epsilon$ since
$\mathbb{P}(S'_{\epsilon}(n+1)-S'_{\epsilon}(n)=1)$ is not
$\frac{1}{2}+\frac{1}{2}(1-e^{-\epsilon})
\approx\frac{1}{2}+\frac{1}{2}\epsilon$, but rather is
$(e^{4\epsilon}-1) / (e^{4\epsilon}-e^{-4\epsilon})
\approx\frac{1}{2}+\epsilon$. But proving (\ref{theineq}) with
$S_{\epsilon}$ replaced by $S_{\epsilon}'$ suffices since $S_{\epsilon}'$
has a larger positive drift than $S_{\epsilon}$.

Now, let us consider some variants of $B_{\epsilon}$ and $S_{\epsilon}'$.
Define $n(\epsilon)=\inf\{n: T_n^0 \geq \epsilon^{-a}\}$, where
$a \in (0,2/3)$, as explained later,
and $\bar B_{\epsilon}$ is defined as
\begin{equation}
\bar B_{\epsilon}(t)= 2 \epsilon \ (t-T_{n(\epsilon)}^0)
1_{t\geq T_{n(\epsilon)}^0 }+B(t) \, ;
\end{equation}
this is the diffusion obtained by turning on a small drift of magnitude
$ 2 \epsilon$ after the stopping time $T_{n(\epsilon)}^0$. In exactly the
same way, we define $\bar T_i$ and $\bar \tau_i$ for $\bar B_{\epsilon}$
(but without the $\epsilon$ superscript) and define $\bar S_{\epsilon}(n)
\equiv \bar S_{\epsilon}^a(n)=\bar B_{\epsilon}(\bar T_n)$. In particular
$(\bar T_i,\bar \tau_i)$ and $(T_i^0,\tau_i^0)$ coincide
for $i\leq n(\epsilon)$.

Proposition~\ref{principal} is an immediate consequence of the next two lemmas.
The first relates $\bar S_{\epsilon}$ and the continuous process
$B_{\epsilon}$.
\begin{lemma}
\label{firstlemma}
There exists $C>0$ such that for any $l'\in(0,1)$,
\begin{equation}
\label{kdv1}
\mathbb{P}(\forall n\in\mathbb{N}, \, \ \bar S_{\epsilon}(n)
\geq-1-l' K \sqrt{n})\leq C \ \mathbb{P}(\forall t\in\mathbb{R}^+, \,
B_{\epsilon}(t) \geq-3- K \sqrt{t} )+O(\epsilon) \, .
\end{equation}
\end{lemma}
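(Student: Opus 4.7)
The plan is to use the Skorohod-type embedding $\bar S_\epsilon(n) = \bar B_\epsilon(\bar T_n)$ to pass from the discrete hypothesis on $\bar S_\epsilon$ to a continuous statement on $\bar B_\epsilon$, and then to exploit the explicit relation $B_\epsilon(t) - \bar B_\epsilon(t) = 2\epsilon \min(t, T_{n(\epsilon)}^0) \geq 0$ to conclude on $B_\epsilon$. By the very definition of the stopping times $\bar T_n$ as the successive exit times of $\bar B_\epsilon$ from unit windows around its value, $|\bar B_\epsilon(t) - \bar S_\epsilon(n)| \leq 1$ for all $t \in [\bar T_n, \bar T_{n+1}]$. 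Hence on the event $\{\bar S_\epsilon(n) \geq -1 - l' K \sqrt{n} \text{ for all } n\}$ one obtains $\bar B_\epsilon(t) \geq -2 - l' K \sqrt{n}$ whenever $\bar T_n \leq t < \bar T_{n+1}$.

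To convert this $n$-indexed bound into a $t$-indexed one, I would pick an auxiliary parameter $l$ with $l' < l < 1$ and work on a good event $G$ on which $\bar T_n \geq l^2 n$ for every $n$ past a fixed threshold $n_\ast = \lceil 1/(l'K)^2 \rceil$; for $n \leq n_\ast$ the quantity $l' K \sqrt{n}$ is bounded by $1$, and the estimate $\bar B_\epsilon(t) \geq -3 \geq -3 - K\sqrt{t}$ follows immediately from the previous paragraph. For $n > n_\ast$ on $G$, the inequality $\sqrt{t} \geq \sqrt{\bar T_n} \geq l\sqrt{n}$ gives $l' K \sqrt{n} \leq (l'/l) K \sqrt{t} \leq K\sqrt{t}$, and combining again with the preceding estimate yields $\bar B_\epsilon(t) \geq -3 - K\sqrt{t}$ for every $t \geq 0$. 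Since $\bar B_\epsilon \leq B_\epsilon$ pointwise by the defining formula, this bound transfers directly to $B_\epsilon$; taking probabilities then yields the stated inequality up to an additive error of $\mathbb{P}(G^c)$.

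The main obstacle is to establish $\mathbb{P}(G^c) = O(\epsilon)$ at the rate demanded by the statement. The exit-time increments $\bar\tau_i = \bar T_i - \bar T_{i-1}$ are independent, each with mean close to $1$ and finite exponential moments, so Cram\'er-type large-deviation estimates give $\mathbb{P}(\bar T_n < l^2 n) \leq e^{-cn}$; however, a naive union bound only yields $\mathbb{P}(G^c) \leq \sum_{n > n_\ast} e^{-cn}$, which is a fixed constant rather than polynomial in $\epsilon$. To obtain the stated rate one needs a sharper argument: either refine $G$ by allowing its threshold to depend on $\epsilon$ (say of order $\log(1/\epsilon)$) and handle the correspondingly larger small-$n$ range through a separate comparison on $[0, T_{n(\epsilon)}^0]$, where $\bar B_\epsilon$ coincides with a standard Brownian motion; or exploit the essential independence of the sequence $\{\bar T_n\}$ from the signs generating $\bar S_\epsilon$ to bound the joint probability $\mathbb{P}(G^c \cap \{\bar S_\epsilon \text{ above curve}\})$, rather than $\mathbb{P}(G^c)$ alone. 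The multiplicative constant $C$ in the statement provides additional slack to absorb any bounded losses that arise from this coordination.
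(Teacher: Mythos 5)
Your setup matches the paper's: the embedding $\bar S_\epsilon(n)=\bar B_\epsilon(\bar T_n)$, the pointwise inequality $\bar B_\epsilon\leq B_\epsilon$, the unit-window bound $|\bar B_\epsilon(t)-\bar S_\epsilon(n)|\leq 1$ on $[\bar T_n,\bar T_{n+1}]$, and the introduction of a good event $G$ on which $\bar T_n$ grows at least linearly so that the $n$-indexed curve can be compared to the $t$-indexed one. You also correctly identify the crux: $\mathbb{P}(G^c)$ is a fixed positive constant, not $O(\epsilon)$, because for the first $n(\epsilon)$ steps the increments $\bar\tau_i=\tau_i^0$ are the driftless exit times and a permanent shortfall $\bar T_n<ln$ has probability bounded away from zero.

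Where your proposal stops short is in the resolution. You offer two alternatives; neither alone closes the gap, and the one you lean on misstates the relevant structure. The sequence $\{\bar T_n\}$ is \emph{not} ``essentially independent'' of the event $\{\bar S_\epsilon\mbox{ above the curve}\}$: in the paper's coupling, $\bar S_\epsilon(n)=S'_0(n)+2\sum_{i\leq n}\mathbf{1}_{n>n(\epsilon)}X_i(\epsilon)$ depends on $\{\tau_i^0\}$ through $n(\epsilon)$, so the two events are genuinely dependent. What saves the argument is that the dependence has a sign: both $\{\forall n,\ \bar S_\epsilon(n)\geq -1-K\sqrt{l'}\sqrt n\}$ and $\{\forall n\leq n(\epsilon),\ \bar T_n\geq nl\}$ are nondecreasing in the independent variables $\{\tau_i^0\}$, $\{S'_0(i)-S'_0(i-1)\}$, $\{X_i(\epsilon)\}$, so the FKG inequality (Lemma~\ref{third}) gives
\begin{equation*}
\mathbb{P}\bigl(G\cap\{\mbox{above curve}\}\bigr)\ \geq\ \mathbb{P}(G)\ \mathbb{P}(\mbox{above curve}),
\end{equation*}
and since $\mathbb{P}(G)\geq \mathbb{P}(\forall n,\ \sum_{i\leq n}(\tau_i^0-l)\geq 0)>0$ by a gambler's-ruin argument, the constant-order failure probability is absorbed into the multiplicative constant $C=1/\mathbb{P}(G)$ rather than having to be $O(\epsilon)$. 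The $O(\epsilon)$ error, in turn, comes from a \emph{separate} step: for $n>n(\epsilon)$ the $\bar\tau_i$ carry a positive drift, and the Law of Large Numbers with large deviations shows that $\bar T_n\geq l'n$ for all such $n$ except on a set of probability $O(\epsilon)$ (this is the replacement of $K\sqrt{\bar T_n}$ by $K\sqrt{l'n}$). So the paper combines your two suggested mechanisms rather than choosing between them, and the missing ingredient you need to articulate is the correlation inequality, not independence.
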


The next lemma relates $S_{\epsilon}'$ and $\bar S_{\epsilon}$.

\begin{lemma}
\label{secondlemma}
Let $S_{\epsilon}'$
and $\bar S_{\epsilon}^a\equiv\bar S_{\epsilon}$ be as defined above.
There exists $0<a<1$ such that for any $l\in(0,1)$,
\begin{equation}
\label{kdv}
\mathbb{P}(\forall n\in\mathbb{N}, \, \ S_{\epsilon}'(n) \geq -1-l K
\sqrt{n})\leq \mathbb{P}(\forall n\in\mathbb{N}, \, \ \bar S_{\epsilon}(n)
\geq -1-K \sqrt{n} ) +O(\epsilon) \, .
\end{equation}
\end{lemma}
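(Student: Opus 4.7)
\textbf{Plan for the proof of Lemma~\ref{secondlemma}.}
The strategy is to couple $S'_\epsilon$ and $\bar S_\epsilon$ directly at the level of their $\pm 1$ increments---rather than through the Brownian motion $B$---so that their difference $D_n:=S'_\epsilon(n)-\bar S_\epsilon(n)$ is manifestly nonnegative, nondecreasing in the ``pre-drift'' phase $n\leq n(\epsilon)$, and constant afterwards, and then to bound the probability that $D_n$ ever exceeds the slack $(1-l)K\sqrt n$ between the two barriers. The coupling is legitimate because $n(\epsilon)=\inf\{n:T_n^0\geq\epsilon^{-a}\}$ depends only on the exit times $\{\tau_i^0\}$ of $B$ and is independent of the directional $\pm 1$ choices of the walks embedded in $B$, so the directional increments may be coupled freely while preserving the marginal laws of $S'_\epsilon$ and $\bar S_\epsilon$ prescribed in the paper.

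Concretely, set $p:=q-1/2=O(\epsilon)$ with $q=(e^{4\epsilon}-1)/(e^{4\epsilon}-e^{-4\epsilon})$. For $i\leq n(\epsilon)$ I use the maximal monotone coupling of a Bernoulli$(q)$ step of $S'_\epsilon$ and a Bernoulli$(1/2)$ step of $\bar S_\epsilon$, so that $\bar S_\epsilon$ steps up only when $S'_\epsilon$ does; for $i>n(\epsilon)$, where both walks have the same Bernoulli$(q)$ step law, I let them take identical steps. Under this coupling, $D_n$ has the claimed monotonicity/constancy structure and its increments over $[1,n(\epsilon)]$ are i.i.d.\ taking the value $2$ with probability $p$ and $0$ otherwise. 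On the event $E:=\{\forall n,\, S'_\epsilon(n)\geq -1-lK\sqrt n\}$ intersected with the good event $G:=\{\forall n\geq 1,\, D_n\leq (1-l)K\sqrt n\}$, we have $\bar S_\epsilon(n)=S'_\epsilon(n)-D_n\geq -1-K\sqrt n$, so $E\cap G\subseteq F:=\{\forall n,\, \bar S_\epsilon(n)\geq -1-K\sqrt n\}$, giving $\mathbb P(E)\leq\mathbb P(F)+\mathbb P(G^c)$. It remains to prove $\mathbb P(G^c)=O(\epsilon)$.

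To control $G^c$, let $J_k:=\inf\{n:D_n=2k\}$ be the time of the $k$-th upward jump of $D$ and set $N_0:=\lceil 4/((1-l)K)^2\rceil$. Since $D_n$ takes only even nonnegative integer values and is nondecreasing, the event $G^c$ is equivalent to $\{\exists k\geq 1:\, 2k>(1-l)K\sqrt{J_k}\}=\{\exists k\geq 1:\, J_k<k^2 N_0\}$. For each $k$, $\{J_k<k^2N_0\}$ says that a Binomial$(\lceil k^2N_0\rceil,p)$ variable is at least $k$, so by the elementary estimate $\binom{m}{k}p^k\leq(emp/k)^k$, its probability is at most $(ekN_0 p)^k=(C(K,l)\,k\,\epsilon)^k$; summing over $k\geq 1$ gives $\mathbb P(G^c)=O(\epsilon)$, dominated by the $k=1$ term. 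The hard part is to see that the naive union bound over all $n$ does \emph{not} give $O(\epsilon)$; the key simplification is the reduction to crossings $J_k$ of the monotone process $D$, which makes the series telescope. A minor technical point is the random nature of $n(\epsilon)$, which is handled by first conditioning on the exit times $\{\tau_i^0\}$ (independent of the step-direction coupling) and then integrating, or equivalently by replacing $n(\epsilon)$ by the deterministic value $\lceil \epsilon^{-a}\rceil$ at negligible cost.
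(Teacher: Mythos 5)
Your coupling is the same one the paper uses (in slightly different notation), and the reduction $\mathbb P(E)\le \mathbb P(F)+\mathbb P(G^c)$ with $G=\{\forall n,\ D_n\le(1-l)K\sqrt n\}$ is exactly the paper's first step. The difference is in how $\mathbb P(G^c)$ is controlled. The paper runs an induction over the boundary-crossing levels $k_i$ defined by $i=K(1-l)\sqrt{k_i}$, bounds the probability of first crossing in $(k_{i-1},k_i]$ by the probability of (at least) two jumps in that window, and sums $\epsilon^2\sum_i(k_i-k_{i-1})^2=O(\epsilon^2 n(\epsilon)^{3/2})$, which forces $a<2/3$. You instead observe that, because $D$ is monotone with jumps of size $2$, the first crossing must occur at a jump time $J_k$, so $G^c=\bigcup_k\{J_k<k^2N_0\}$, and then invoke a binomial tail bound. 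This is genuinely cleaner: it avoids the even/odd bookkeeping the paper's "at least two jumps" claim tacitly relies on, gives a sharper per-term estimate ($O((k\epsilon)^k)$ rather than $O(k^2\epsilon^2)$), and in fact works for any $a<1$, not just $a<2/3$.

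There is one gap you should close. You write that "summing over $k\ge 1$" of the bound $(C(K,l)\,k\,\epsilon)^k$ gives $O(\epsilon)$, dominated by the $k=1$ term. As stated this is false: for fixed $\epsilon>0$ the series $\sum_{k\ge 1}(Ck\epsilon)^k$ diverges, since the $k$-th term grows once $k\gtrsim(C\epsilon)^{-1}$. The fix is easy but needs to be said: $D$ can have at most $n(\epsilon)+1=O(\epsilon^{-a})$ jumps (there are only that many Bernoulli trials), so $\mathbb P(J_k<k^2N_0)=0$ for $k>n(\epsilon)+1$ and the sum is really over $1\le k\le n(\epsilon)+1$. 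Over that range, $Ck\epsilon\le C\,\epsilon^{1-a}$ which tends to $0$ as $\epsilon\to 0$ (for $a<1$), so the ratio of consecutive terms is bounded by a constant $<1$ for $\epsilon$ small, and the sum is indeed $O(\epsilon)$, dominated by $k=1$. (Alternatively, bound $\mathbb P(J_k<k^2N_0)$ for $k^2N_0>n(\epsilon)$ by the Binomial$(n(\epsilon)+1,p)$ tail; this gives a super-exponentially small contribution for $k>\sqrt{n(\epsilon)/N_0}$.) With that cap made explicit, the argument is complete and correct.
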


\noindent {\it Proof of Lemma~\ref{firstlemma}.\/}
$\bar B_{\epsilon}$ has a smaller positive drift than $B_{\epsilon}$ and
therefore it is enough to prove (\ref{kdv1}) with $B_{\epsilon}$
replaced by $\bar B_{\epsilon}$.


By construction, for $t \in [\bar T_n , \bar T_{n+1})$,
$|\bar B_{\epsilon} (t) -\bar S_{\epsilon}(n)| < 1$, implying that
\begin{align}
\label{fff}
\mathbb{P}\left(\forall t, \, \bar B_{\epsilon}(t)
\geq-3-K \sqrt{t} \right) \geq
\mathbb{P}\left(\forall n, \,  \bar S_{\epsilon}(n) \geq-1-K
(\bar T_{n})^\frac{1}{2}\right) \nonumber \\
\geq \mathbb{P}\left(\forall n, \, \bar S_{\epsilon}(n) \geq-1-K
(\bar T_{n})^\frac{1}{2} \ {\textrm {and}}  \ \forall n\leq n(\epsilon), \
\bar T_{n} \geq n \ l \right) \, .
\end{align}
Here $l$ is arbitrary in $(0,1)$. To conclude the argument, we
proceed in two parts.
\begin{enumerate}
\item The first part is to show that except on a set of probability
$O(\epsilon)$, $K(\bar T_{n})^{1/2}$
can be replaced by $K (l' n)^{1/2}$ in the last expression of~(\ref{fff}),
with $l'=l / (2-l)$ so that $l' \to 1$ as $l \to 1$. This will be done
essentially by an application of the Law of Large Numbers.
\item Once the above replacement has been made, the desired conclusion
follows directly from the correlation inequality of Lemma~\ref{third}
and the inequality,
\begin{equation}
\mathbb{P}(\forall n\leq n(\epsilon), \, \ {\bar T_n} \equiv
\sum_{i=1}^n \bar \tau_i \geq n \ l)\geq \mathbb{P}(\forall n, \, \ \
\sum_{i=1}^n (\tau_i^0-l) \geq 0) >0 \, ,
\end{equation}
where the $\tau_i^0$ were defined in~(\ref{tautau}), with $\epsilon=0$.
Noting that $\mathbb{E} (\tau_i^0) = 1$ and hence
$\mathbb{E} (\tau_i^0 - l) > 0$, the last displayed inequality is a standard
fact about sums of i.i.d. positive mean random variables. In gambling terms,
it  says that a gambler with a slight advantage has a strictly positive
probability of never falling behind.
\end{enumerate}

It remains to justify the first part, for which it is enough to prove that,
up to an error of at most $O(\epsilon)$,  on the event
$\{\forall n\leq n(\epsilon), \, \ {\bar T_n} \equiv
\sum_{i=1}^n \bar \tau_i\geq n \ l\}$, the inequality
$\sum_{i=1}^n \bar \tau_i> l' n$ is actually valid for all $n$.
For $n>n(\epsilon)$, the $\bar \tau_i$ are the exit times
$\tau_i^\epsilon$ of Brownian motion with a small drift~$\epsilon$.
Clearly, $\mathbb{E} (\tau_i^\epsilon) \to \mathbb{E} (\tau_i^0) = 1$
as $\epsilon \to 0$. By the Law of Large Numbers and standard large
deviation estimates, we can assume  that $n(\epsilon)$ is in
$[l \, \epsilon^{-a}, (1/l) \, \epsilon^{-a}]$ and
show that the event
\begin{equation}
\label{eve2}
\{\forall n \geq n(\epsilon)+(1-l)n(\epsilon), \,
\sum_{i=n(\epsilon)+1}^n \bar \tau_i \geq l(n-n(\epsilon))\}
\end{equation}
occurs, except on a set of probability $O(\epsilon)$. Hence, up to this error,
on the event
$\{\forall n\leq n(\epsilon), \, \ \ \sum_{i=1}^n \bar \tau_i \geq n \ l\}$,
the inequality $\sum_{i=1}^n \bar \tau_i \geq n \ l$  can be extended from
all $n \leq n(\epsilon)$ also to all $n>n(\epsilon)+(1-l)n(\epsilon)$. Hence,
it only remains
to control the indices $n$ in $\{n(\epsilon)+1,...,n(\epsilon)+
[(1-l)n(\epsilon)]\}$. Since $\bar \tau_1+...+\bar \tau_{n(\epsilon)} \geq
n(\epsilon) l $, we get that for any such $n$,
\begin{equation}
\bar \tau_{1}+...+\bar \tau_n \geq n(\epsilon) l = l \ n
\frac{n(\epsilon)}{n} \geq \frac{l}{1+(1-l)} n=l' n \, .
\end{equation}
This completes the proof of Lemma~\ref{firstlemma}

\begin{lemma}
\label{third}
\begin{align*}
\mathbb{P}(\forall n, \, \ \bar S_\epsilon(n) \geq -1-K
\sqrt{l'} n^\frac{1}{2} \ {\textrm{and}}  \
\forall n\leq n(\epsilon), \ \sum_{i=1}^n\bar \tau_i\geq n \ l) \\
\geq\mathbb{P}(\forall n, \, \ \bar S_\epsilon (n) \geq -1-K \sqrt{l'}
n^\frac{1}{2})\ \ \mathbb{P}(\ \forall n\leq n(\epsilon),
\ \sum_{i=1}^n\bar \tau_i\geq n \ l).
\end{align*}
\end{lemma}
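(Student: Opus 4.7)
The plan is to realize both events on a common product probability space driven by i.i.d.\ Uniform$(0,1)$ variables $\{U_i,V_i\}_{i\ge 1}$, where each event becomes coordinatewise increasing, and then invoke the Harris--FKG correlation inequality. The apparent obstruction --- that for drifted Brownian motion the exit time and exit location are correlated, so one cannot naively decouple signs and magnitudes into independent uniforms --- will be resolved by a monotone coupling of the symmetric and drifted sign variables through a single uniform, made possible by the observation that this joint law at a common index never feeds into either event.

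Concretely, let $F$ be the CDF of the exit time from $[-1,1]$ of standard Brownian motion started at $0$, and let $p=p(\epsilon)>1/2$ denote the probability that Brownian motion with drift $2\epsilon$ started from $0$ exits $[-1,1]$ at $+1$. I would set $\tau_i^0=F^{-1}(V_i)$, $T_n^0=\sum_{i\le n}\tau_i^0$, $n(\epsilon)=\inf\{n:\,T_n^0\ge\epsilon^{-a}\}$, and couple the two kinds of sign variables through the \emph{same} uniform by
\[
\epsilon_i^{\mathrm{sym}}=2\,\mathbf 1_{\{U_i\ge 1/2\}}-1,\qquad \epsilon_i^{\mathrm{drift}}=2\,\mathbf 1_{\{U_i\ge 1-p\}}-1,
\]
so that $\epsilon_i^{\mathrm{drift}}\ge\epsilon_i^{\mathrm{sym}}$ pathwise (since $1-p<1/2$). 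Then put $\epsilon_i^\star=\epsilon_i^{\mathrm{sym}}\mathbf 1_{\{i\le n(\epsilon)\}}+\epsilon_i^{\mathrm{drift}}\mathbf 1_{\{i>n(\epsilon)\}}$, $\bar S_\epsilon(n)=\sum_{i\le n}\epsilon_i^\star$, and $\bar\tau_i=\tau_i^0$ for $i\le n(\epsilon)$. Conditionally on $n(\epsilon)=N$, the three families $(\tau_i^0)_{i\le N}$, $(\epsilon_i^{\mathrm{sym}})_{i\le N}$, and $(\epsilon_i^{\mathrm{drift}})_{i>N}$ depend on disjoint subsets of the uniforms and are therefore mutually independent with the correct marginals, so the joint law of $\bigl((\bar\tau_i)_{i\le n(\epsilon)},(\epsilon_i^\star)_{i\ge 1}\bigr)$ matches the one inherited from $\bar B_\epsilon$ via the strong Markov property at $T_{n(\epsilon)}^0$; the dependent joint law of $(\epsilon_i^{\mathrm{sym}},\epsilon_i^{\mathrm{drift}})$ at a shared index $i$ never enters.

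Next I would check that both events are coordinatewise increasing in $\{U_i,V_i\}$. The event $E_1=\{\forall n\le n(\epsilon),\,T_n^0\ge nl\}$ depends only on $\{V_i\}$; increasing $V_j$ enlarges $\tau_j^0$, increases $T_n^0$ for all $n\ge j$, and (weakly) decreases $n(\epsilon)$, each of which only makes $E_1$ easier. The event $E_2=\{\forall n,\,\bar S_\epsilon(n)\ge -1-K\sqrt{l'}\,n^{1/2}\}$ depends on both $\{U_i\}$ and $\{V_i\}$: increasing $U_j$ can only flip $\epsilon_j^\star$ from $-1$ to $+1$, so $\bar S_\epsilon(\cdot)$ is pointwise nondecreasing; increasing $V_j$ can only decrease $n(\epsilon)$, which at each displaced index replaces $\epsilon_i^{\mathrm{sym}}$ by the pointwise larger $\epsilon_i^{\mathrm{drift}}$, so again $\bar S_\epsilon(\cdot)$ is pointwise nondecreasing. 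Thus both events are coordinatewise increasing, and the Harris--FKG inequality on the product measure of the i.i.d.\ uniforms yields $\mathbb P(E_1\cap E_2)\ge \mathbb P(E_1)\mathbb P(E_2)$.

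The hard part is the coupled construction: the monotone coupling of signs through a common $U_i$ (rather than independent uniforms) is precisely what makes $E_2$ --- and not only $E_1$ --- monotone in the $V_i$'s, and it is legitimate only because $\epsilon_i^{\mathrm{sym}}$ and $\epsilon_i^{\mathrm{drift}}$ never jointly appear in either event. Once this coupling is secured, the monotonicity checks and the application of Harris--FKG are routine.
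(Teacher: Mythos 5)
Your proposal is correct and follows essentially the same route as the paper: both arguments realize $\bar S_\epsilon$ and the $\bar\tau_i$ on a product space of independent coordinates, exploit the crucial fact that for standard Brownian motion the exit time from $[-1,1]$ is independent of the exit location, observe that only one of the two sign laws at a given index ever enters either event, and then invoke the Harris--FKG inequality after checking that both events are coordinatewise increasing. Your monotone coupling of the symmetric and drifted signs through a single uniform (so that $\epsilon_i^{\mathrm{drift}}\ge\epsilon_i^{\mathrm{sym}}$ pathwise) is a cleaner, more explicit rendering of the role played by the paper's auxiliary $\{0,1\}$-valued variables $X_i(\epsilon)$, whose defining formula in the paper is somewhat loosely stated, but the underlying mechanism and the final FKG step are the same.
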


\noindent {\it Proof of Lemma~\ref{third}.}
This result is a consequence of the FKG inequality for independent
random variables. The variables
$\{\tau_i^0 \equiv T_i^0 - T_{i-1}^0 \}$ (see~(\ref{tautau})) and $S_0= S'_0$
are completely independent, since in the case of a standard Brownian motion,
knowing the exit time from the interval $[-1,1]$ does not give any information
about the exit location. Hence, given $n(\epsilon)$, $\bar S_{\epsilon} (n)$
behaves as a usual symmetric simple random walk for $n \leq n(\epsilon)$.
Thereafter the walk has a small positive drift to the right.
This suggests that the indicator of the event
$\{\forall n, \bar S_\epsilon (n) \geq -1-K \sqrt{l'} n^\frac{1}{2}\}$
can be expressed as a nondecreasing function of  $\{\tau_{i}^0\}$
(and some other variables to be determined) since
the larger $\{\tau_{i}^0\}$ is, the smaller $n(\epsilon)$ will be,
inducing more drift for $\bar S_{\epsilon}$.
To make this more precise,
we will couple $S'_0, S'_\epsilon$ and $\bar S_\epsilon$.

The coupling involves the mutually independent $(0,\infty)$-valued
$\{\tau_{i}^0\}$, $\{-1,+1\}$-valued $\{S'_0(i)-S'_0(i-1)\}$
and $\{0,1\}$-valued $\{X_i(\epsilon)\}$, with
\begin{equation}
\mathbb{P}(X_i(\epsilon)=1) = \mathbb{E}(S'_\epsilon(i) - S'_\epsilon(i-1))/2
\ = \ \epsilon+o(\epsilon).
\end{equation}
The coupling is not via a Brownian motion but rather is given in
terms of our independent variables by
\begin{equation}
S_{\epsilon}'(n) \ = \ S'_0(n) + 2 \sum_{i=0}^n X_i(\epsilon)
\end{equation}
and
\begin{equation}
\bar S_{\epsilon}(n) \ = \ S'_0(n) + 2 \sum_{i=0}^n
1_{n > n(\epsilon)}X_i(\epsilon) \, .
\end{equation}
It is clear now that the above suggestion about the nondecreasing
nature of the event in question is indeed valid. Since the other 
event, $\{\forall n\leq n(\epsilon), \ \sum_{i=1}^n\bar \tau_i\geq n \ l\}$
is clearly nondecreasing with
$\tau_{i}^0$, the claim of Lemma~\ref{third} follows by the FKG inequality.

\noindent {\it Proof of Lemma~\ref{secondlemma}.\/}
Let us condition on $n(\epsilon)$.
We use the coupling of $\bar S_{\epsilon}=\bar S_{\epsilon}^a$ and
$S_{\epsilon}'$ just discussed. Now
\begin{align}
\label{reduce}
\mathbb{P}(\forall n\in\mathbb{N}, \, \ S_{\epsilon}'(n) \geq -1-l K \sqrt{n})
\leq \mathbb{P}(\forall n\in\mathbb{N}, \, \ \bar S_{\epsilon}(n) \geq
-1-K \sqrt{n} )  \nonumber \\ 
+ \mathbb{P}(\forall n\in\mathbb{N}, \, \  S_{\epsilon}'(n) \geq
-1- l K \sqrt{n} \ {\textrm {and}} \
\bar S_{\epsilon}(\cdot) \ \textrm{hits} \ \ -1-K \sqrt{n}   ) 
\end{align}
and we need to prove that the last term is of order $\epsilon$ for a
suitable choice of the exponent $a$ (where $\epsilon^{-a}$
is the time threshold at which $\bar S_{\epsilon}$
starts drifting).

First,
\begin{eqnarray*}
\mathbb{P}(\forall n, \, \  S_{\epsilon}'(n) \geq -1- l K \sqrt{n} \
{\textrm {and}} \
\bar S_{\epsilon} \ \textrm{hits} \ -1-K \sqrt{n}   )
\leq \\
\mathbb{P}({\textrm {for some}}\,n, \
S_{\epsilon}'(n)-\bar S_{\epsilon}(n)
\geq K(1-l)\sqrt{n}) \, .
\end{eqnarray*}
Denoting by $k_i$ the solution of $i=K(1-l)\sqrt{k_i}$, we will show by
induction on $i$ that as $\epsilon \to 0$,
\begin{align}
\label{ps}
\mathbb{P}({\textrm {for some}}\ n\in (0,k_i],
S_{\epsilon}'(n)-\bar S_{\epsilon}(n)
\geq  K(1-l)\sqrt{n})  \, = \, \nonumber \\
O(\epsilon k_1 + \epsilon^2 (k_2-k_1)^2 +...+ \epsilon^2 (k_{i}-k_{i-1})^2).
\end{align}
Assuming this has been proved, we let $N$ be such that
$k_{N}\leq {n(\epsilon)}{\leq k_{N-1}}$, and (\ref{ps}) then implies that
\begin{eqnarray*}
\mathbb{P}({\textrm {for some}}\ n\in (0,n(\epsilon)], \, S_{\epsilon}'(n)-
\bar S_{\epsilon}(n)\geq K(1-l) n^\frac{1}{2})
\, =  \nonumber \\ 
 O(\epsilon k_1 + \epsilon^2 (k_2-k_1)^2 +...+
\epsilon^2 (k_{N}-k_{N-1})^2) \, .
\end{eqnarray*}
For large $n$, $k_{n+1}-k_{n} \approx  \frac{2 n}{(1-l)^2 K^2}$
and therefore $\sum_{i=1}^N (k_{i}-k_{i-1})^2=O(N^{3})=
O(n(\epsilon)^{\frac{3}{2}})$, implying that
\begin{equation}
\mathbb{P}({\textrm {for some}}\ n\in (0,n(\epsilon)], \, S_{\epsilon}'(n)-
\bar S_{\epsilon}(n)\geq K(1-l) n^\frac{1}{2})=
O(\epsilon^2 n(\epsilon)^{\frac{3}{2}}) \, .
\end{equation}
Since we can assume by the Law of Large Numbers that ${l}
\epsilon^{-a} \leq n(\epsilon)\leq \epsilon^{-a}/l $,
taking $a<\frac{2}{3}$
implies that $\epsilon^2 n(\epsilon)^{\frac{3}{2}}=O(\epsilon)$, and
we get that the last term of  (\ref{reduce}) is $O(\epsilon)$. Note that
everything was done independently of $n(\epsilon)$ (except that
$l \epsilon^{-a} \leq n(\epsilon)\leq \epsilon^{-a}/l$). Therefore,
summing over the possible values of $n(\epsilon)$
would finish the proof.

It remains to prove~(\ref{ps}), which we do by induction.
First, for $i=1$, since $1=K(1-l) \sqrt{k_1}$,
\begin{eqnarray*}
\mathbb{P}({\textrm {for some}}\ n\in (0,k_1],
\, S_{\epsilon}'(n)-\bar S_{\epsilon}(n)
\geq K(1-l) \sqrt{n})
& = & \mathbb{P}(S_{\epsilon}'(\cdot)-\bar S_{\epsilon}(\cdot) \
\textrm{jumps on
$[0,k_1]$}) \\ 
& = & k_1 O(\epsilon)  .
\end{eqnarray*}
Next, assuming that~(\ref{ps}) is valid up to $i$, we have
\begin{eqnarray*}
& \mathbb{P}({\textrm {for some}}\ n\in(0,k_{i+1}], \,
\ S_{\epsilon}'(n)-\bar S_{\epsilon}(n)
\geq K(1-l)\sqrt{n}) & \\
& \leq  \mathbb{P}( {\textrm {for some}}\ n\in(0,k_i], \  S_{\epsilon}'(n)-
\bar S_{\epsilon}(n)\geq K(1-l)\sqrt{n}) & \\
& +
\mathbb{P}(\forall n \in (0,k_i], \ S_{\epsilon}'(n)-
\bar S_{\epsilon}(n)<K(1-l)\sqrt{n} & \\
&\ {\textrm {and for some}} \
n \in (k_{i},k_{i+1}], \ S_{\epsilon}'(n)-
\bar S_{\epsilon}(n)\geq K(1-l)\sqrt{n}) \, . &
\end{eqnarray*}
We need to bound the last term of this inequality.
Since on  $(k_i,k_{i+1}]$ we have $(1-l)K\sqrt{n}\in(i,i+1]$
(by the definition of $k_i$), and since $S_{\epsilon}'-\bar S_{\epsilon}$
only takes integer value, if
$S_{\epsilon}'(n)-\bar S_{\epsilon}(n)\geq K(1-l)\sqrt{n}$, then
$S_{\epsilon}'(k_{i+1})-\bar S_{\epsilon}(k_{i+1})\geq i+1$. On the
other hand, $S_{\epsilon}'(k_i)-\bar S_{\epsilon}(k_i) < i$,
and then our process has to jump at
least twice on $(k_i,k_{i+1}]$. But this probability is bounded by
a term of order $(k_{i+1}-k_{i})^2\epsilon^2$ and (\ref{ps}) follows.
This completes the proof of Lemma~\ref{secondlemma}.


\bigskip

\bigskip

{\em Acknowledgements.}
The research of L.R.G.~Fontes was supported in part by FAPESP grant
2004/07276-2 and CNPq grants 307978/2004-4 and 484351/2006-0;
the research 
of the other authors was supported in part by N.S.F. grants DMS-01-04278 and 
DMS-06-06696.

\end{document}